\documentclass[12pt,a4paper]{amsart}
\usepackage{amsmath}
\usepackage{amsthm}
\usepackage{amssymb}
\usepackage{drpack}
\usepackage[english]{babel}
\usepackage{verbatim}
\usepackage[shortlabels]{enumitem}

\newtheoremstyle{mio}%
{}{} % spazio sopra e sotto%
{\itshape}{} % corpo del testo, indentation
{\bfseries}{.}{ } % titolo del teorema: tipo di testo, divisore, spaziatura
{#1 #2\thmnote{~\mdseries(#3)}} % formattazione nota
\theoremstyle{mio}
\newtheorem{teor}{Theorem}[section]
\newtheorem{cor}[teor]{Corollary}
\newtheorem{prop}[teor]{Proposition}
\newtheorem{lemma}[teor]{Lemma}
\newtheorem{defin}[teor]{Definition}

\newtheoremstyle{definition2}%
{}{} % spazio sopra e sotto%
{}{} % corpo del testo, indentation
{\bfseries}{.}{ } % titolo del teorema: tipo di testo, divisore, spaziatura
{#1 #2\thmnote{\mdseries~ #3}} % formattazione nota
\theoremstyle{definition2}
\newtheorem{ex}[teor]{Example}
\newtheorem{oss}[teor]{Remark}

\newcommand{\recip}{\mathcal{R}}
\newcommand{\local}{\mathcal{O}}
\newcommand{\aff}{\mathbb{A}}
\newcommand{\proj}{\mathbb{P}}
\newcommand{\mm}{\mathfrak{m}}
\newcommand{\pp}{\mathfrak{p}}

\newcommand{\transf}{\mathcal{T}}
\newcommand{\xx}{\mathcal{X}}
\newcommand{\unit}{\mathcal{U}}
\newcommand{\XX}{\mathbf{X}}
\newcommand{\yy}{\mathcal{Y}}
\newcommand{\Vaff}{V}
\newcommand{\Vproj}{\overline{V}}
\newcommand{\divv}{\mathrm{div}}

\title{The reciprocal complement of a surface}
\author{Dario Spirito}
\address{Dipartimento di Scienze Matematiche, Fisiche e Informatiche, Universit\`a di Udine, Udine, Italy}
\email{dario.spirito@uniud.it}
\keywords{Reciprocal complement; surfaces; projective closure; quadrics; projective plane}
\subjclass[2020]{Primary: 13G05, 14J99. Secondary: 14B05, 14H20.}
\begin{document}
\begin{abstract}
We study the reciprocal complement $\recip(D)$ of a two-dimensional finitely generated $K$-algebra $D$ by linking it with the properties of a surface with coordinate ring $D$. We give several sufficient criteria to have $\dim\recip(D)=2$, and we use them to show several explicit examples; in particular, we determine the dimension of $\recip(D)$ when $D$ is the quotient of $K[X,Y,Z]$ by an irreducible polynomial of degree $2$. We also study the integral closure of the localizations of $\recip(K[X,Y])$.
\end{abstract}

\maketitle

\section{Introduction}
Given an integral domain $D$, the \emph{reciprocal complement} of $D$ is the domain $\recip(D)$ generated by the inverses of the nonzero elements of $D$. Then, $\recip(D)$ is always an integral domain with the same quotient field as $D$; yet, the properties of $\recip(D)$ are often very distant from those of $D$. For example, when $D=K[X,Y]$ is the polynomial ring in two indeterminates over a field $K$, the reciprocal complement $\recip(D)$ is a two-dimensional local domain that is neither Noetherian nor integrally closed, and where every non-unit is contained in all but finitely many prime ideals \cite{EGL-RD-polinomyal}. Yet, the structure of $\recip(D)$ is very linked to $D$: for example, if $D$ is a finitely generated $K$-algebra then $\dim\recip(D)\leq\dim(D)$ \cite[Theorem 3.2]{guerrieri-recip}. The construction $\recip(D)$ was introduced in \cite{epstein-egypt-23b} during the study of \emph{Egyptian domains}, i.e., domains $D$ where every $d\in D$ can be written as a sum of reciprocals of elements of $D$; a domain is Egyptian if and only if $\recip(D)$ is the quotient field of $D$. The study of $\recip(D)$ is often very hard; see \cite{EGL-RD-polinomyal,guerrieri-recip,elliott-epstein-factroids,recip-compl-survey} for some results about this construction.

In an earlier paper \cite{reciprocal-curves}, a geometrical approach to $\recip(D)$ was pursued when $D$ is a one-dimensional $K$-algebra and $K$ is an algebraically closed field, by connecting $D$ with the affine curves whose coordinate ring is isomorphic to $D$. The main result was a complete characterization of the dimension of $\recip(D)$: if $\xx$ is a curve with $K[\xx]\simeq D$ and the points at infinity are regular for the projective closure $\overline{\xx}$, then $\dim\recip(D)=1$ if $\overline{\xx}\setminus\xx$ is a single point, while $\dim\recip(D)=0$ (that is, $\recip(D)$ is the quotient field of $D$) if $\overline{\xx}\setminus\xx$ contains more than one point.

In the present paper, we expand the study of \cite{reciprocal-curves} from curves to surfaces, i.e., we consider domains $D$ that are finitely generated $K$-algebras of dimension $2$. While we do not provide a complete characterization for the possible dimension of $\recip(D)$, we show several sufficient conditions for the dimension of $\recip(D)$ to be the maximal possible, i.e., $2$. Moreover, we do not require the base field $K$ to be algebraically closed, as was done in \cite{reciprocal-curves}; indeed, we shall see that the fact that $K$ is algebraically closed or not influences the dimension of closely-related domains, and sometimes the non-closure of $K$ helps in studying the dimension of $\recip(D)$ (see Example \ref{ex:circle} for the former phenomenon and Propositions \ref{prop:noKrational-deg23} and \ref{prop:noKrational-Xd} for the latter).

The structure of the paper is as follows. In Section \ref{sect:curves} we generalize the results of \cite{reciprocal-curves} by removing the condition that the base field $K$ is algebraically closed; the proof is mostly a repetition of the algebraically closed case, using a more refined look at the geometric results used. Section \ref{sect:general} contains some general results on the reciprocal complement of finitely generated $K$-algebras. Section \ref{sect:reducing} presents the main construction of the paper: we study localizations $\recip(D)[f]$ at a regular function $f$ by studying the zero set of $f$, allowing us to reduce the dimension of the varieties involved and (for surfaces) to apply the results proved for curves. This method requires passing from varieties defined over $K$ to varieties defined over the rational function field $K(t)$. In Section \ref{sect:applications} we show how this method can be applied to various classes of varieties; in particular, we completely solve the dimension problem for quadrics in $\aff^3$, i.e., find the dimension of $\recip(K[X,Y,Z]/(f))$ when $f$ is a polynomial of degree $2$ (Example \ref{ex:quadrics}). In the final Section \ref{sect:piano}, we study in greater detail the localizations of the reciprocal complement of the polynomial ring $K[X,Y]$, with particular emphasis on the integral closure of the localizations: when $K$ is an infinite field, we give a geometric proof of the fact that $\recip(K[X,Y])$ is not integrally closed (as opposed to the more computation-based proof given in \cite[Theorem 5.8]{EGL-RD-polinomyal}).

\section{Notation and preliminaries}
Let $K$ be a field with algebraic closure $\overline{K}$. By the \emph{affine space} $\aff^n_K$ of dimension $n$ over $K$ we mean the set of maximal ideals of the polynomial ring $K[X_1,\ldots,X_n]$. A point of $\aff^n_K$ can be represented by a $n$-tuple $(a_1,\ldots,a_n)\in\overline{K}^n$, and two $n$-tuples $(a_1,\ldots,a_n)$ and $(b_1,\ldots,b_n)$ represent the same point if and only if there is a $K$-automorphism $\sigma$ of $\overline{K}$ such that $b_i=\sigma(a_i)$ for every $i$.

Likewise, by the \emph{projective spaces} $\proj^n_K$ we mean the set of the homogeneous ideals of $K[X_0,X_1,\ldots,X_n]$ maximal among the ones properly contained in $(X_0,\ldots,X_n)$. Every point of $\proj^n_K$ can be represented by a $n$-tuple $[a_0:\cdots:a_n]$ of elements of $\overline{K}$, where at least one $a_i\neq 0$; two $n$-tuples $[a_0:\cdots:a_n]$ and $[b_0:\cdots:b_n]$ represent the same point if and only if there are $t\in\overline{K}\setminus\{0\}$ and a $K$-automorphism $\sigma$ of $\overline{K}$ such that $b_i=t\sigma(a_i)$ for every $i$. In particular, $[a_0:\cdots:a_n]=[ta_0:\cdots:ta_n]$ for every $t\in K$, $t\neq 0$.

In general, we identify points of $\aff^n_K$ and $\proj^n_K$ with their coordinate representation as an $n$-tuple. We also identify $\aff^n_K$ with the subset $\{[x_0:\cdots:x_n]\in\proj^n_K\mid x_0\neq 0\}\subset\proj^n_K$.

\medskip

We will use $\xx$ to denote a variety and $X_0,\ldots,X_n$ to denote indeterminates. When dealing with surfaces in $\aff^3$, we will use $X,Y,Z$ for the affine variables and $W$ for the homogenization variable.

If $g_1,\ldots,g_m\in K[X_1,\ldots,X_n]$, we denote by $\Vaff(g_1,\ldots,g_m)$ the variety defined by $g_1,\ldots,g_m$, i.e., $\Vaff(g_1,\ldots,g_m):=\{(a_1,\ldots,a_n)\in\aff^n_K\mid g(a_1,\ldots,a_m)=0\}$. If $\xx:=\Vaff(g_1,\ldots,g_m)\subset\aff^n_K$, the \emph{coordinate ring} of $\xx$ is
\begin{equation*}
K[\xx]:=\frac{K[X_1,\ldots,X_n]}{(g_1,\ldots,g_m)}.
\end{equation*}
Elements of $K[\xx]$ can be thought of as functions $f:\xx\longrightarrow\aff^1_K$; if $K$ is algebraically closed this just means that $f:\xx\longrightarrow K$. If $\xx=\Vaff(g_1,\ldots,g_m)$ is an affine variety and $f\in K[X_1,\ldots,X_n]$, we set
\begin{equation*}
\Vaff_\xx(f):=V(f)\cap\xx=\Vaff(f,g_1,\ldots,g_m).
\end{equation*}

If $g_1,\ldots,g_m\in K[X_0,\ldots,X_n]$ are homogeneous, then $g_1,\ldots,g_m$ also define a zero set in $\proj^n_K$, which we denote by $\Vproj(g_1,\ldots,g_m)$. 

If $\xx\subset\aff^n_K$ is an affine variety, we denote by $\overline{\xx}$ its \emph{projective closure}, i.e., the smallest closed set of $\proj^n_K$ (with respect to the Zariski topology) containing $\xx$. We set
\begin{equation*}
\xx_\infty:=\overline{\xx}\setminus\xx=\{[0:x_1:\cdots:x_n]\in\overline{\xx}\}.
\end{equation*}
We call the points $p\in\xx_\infty$ the \emph{points at infinity} for $\xx$. If $\xx$ is an affine curve, we say that $\xx$ is \emph{regular at infinity} if every $p\in\xx_\infty$ is a regular (i.e., nonsingular) point of $\xx$.

If $\xx$ is a variety and $\yy\subset\xx$ a subvariety, we denote by $\local_{\xx,\yy}$ the local ring of $\xx$ at $\yy$.

If $D$ is a finitely generated $K$-algebra, a \emph{realization} of $D$ is an affine variety $\xx\subseteq\aff^n_K$ such that $K[\xx]\simeq D$. 

\bigskip

All rings considered in this paper are integral domains, i.e., they do not contain zerodivisors.

If $D$ is an integral domain, we denote by $Q(D)$ the quotient field of $D$.

The \emph{reciprocal complement} of $D$ is the ring $\recip(D)$ generated by $1/x$, as $x$ ranges in $D\setminus\{0\}$. The reciprocal complement is again a domain with the same quotient field as $D$, and it is always local \cite[Theorem 2.6]{EGL-RD-polinomyal}. If $f\in D$, there is a prime ideal $\pp_f$ of $\recip(D)$ that is maximal among the ones not containing $1/f$ \cite[Proposition 2.7(1)]{guerrieri-recip}.

A \emph{valuation domain} is an integral domain $V$ such that, for every $x\in Q(V)$, either $x\in V$ or $x^{-1}\in V$. If $D$ is an integral domain and $K$ a field containing $D$, the \emph{Zariski space} $\Zar(K|D)$ is the set of all valuation rings containing $D$ and having quotient field $K$.

\section{Curves}\label{sect:curves}
In this section, we generalize the results of \cite{reciprocal-curves} from algebraically closed fields to arbitrary fields. 

The following result, which act as a bridge between the algebraic and the geometric world, is well-known (for the algebraically closed case, see e.g. \cite[Chapter 1, \textsection 6]{hartshorne}).
\begin{prop}
Let $K$ be a field. Let $\xx$ be a curve on $K$ and let $\overline{\xx}$ be its projective closure. Then, there is a bijection between the points of the normalization of $\overline{\xx}$ and the Zariski space $\Zar(Q(K[\xx])|K)$.
\end{prop}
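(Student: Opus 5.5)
The plan is to reduce everything to the classical correspondence between valuation rings of a function field in one variable and the points of its unique regular projective model. Set $F:=Q(K[\xx])$, a finitely generated field extension of $K$ of transcendence degree $1$. The first step is to show that every $V\in\Zar(F|K)$ that is not $F$ itself is a discrete valuation ring. Pick $t\in F$ transcendental over $K$ with $t\in V$ or $t^{-1}\in V$; replacing $t$ by $t^{-1}$ we may assume $t\in V$. Then $V\cap K(t)$ is a nontrivial valuation ring of $K(t)$ containing $K$, hence it is a localization of $K[t]$ or of $K[t^{-1}]$ at a prime. Since $F$ is a finite extension of $K(t)$, the extension $V$ is again discrete of rank one, because ramification index and residue degree are bounded by $[F:K(t)]$.

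Next I will cover $\overline{\xx}$ by the standard affine charts $U_i=\overline{\xx}\cap\{x_i\neq0\}$ with coordinate rings $A_i$. Let $\widetilde{A_i}$ be the integral closure of $A_i$ in $F$; it is finitely generated over $K$ (Noether's finiteness theorem) and is a one-dimensional normal Noetherian domain, hence a Dedekind domain. The normalization $\widetilde{\overline{\xx}}$ is obtained by gluing the affine curves $\operatorname{Spec}\widetilde{A_i}$, i.e.\ the sets of maximal ideals of the $\widetilde{A_i}$, and its points are the maximal ideals of these rings up to the gluing identifications. I will work with closed points only, represented over $\overline{K}$ modulo $K$-automorphisms as in the conventions of the paper; this matches maximal ideals of $K$-algebras, so no algebraic closedness is needed anywhere. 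The map is then $P\mapsto\local_{\widetilde{\overline{\xx}},P}=(\widetilde{A_i})_{\mm_P}$, a DVR containing $K$ with quotient field $F$. This is well defined and injective: the local rings at distinct points of a separated (projective) scheme with the same function field are distinct, and a DVR is determined by its maximal ideal.

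Surjectivity is the step I expect to be the main obstacle. Given $V\in\Zar(F|K)$ different from $F$, I need some chart with $A_i\subseteq V$. Since $F$ is generated by the ratios $x_j/x_i$, I take $i$ so that $v(x_i/x_0)$ is minimal among the $v(x_j/x_0)$, where $x_j/x_0$ are the affine coordinate functions and $x_0/x_0=1$. Then every $x_j/x_i$ has $v\ge0$, so $A_i\subseteq V$; this is the valuative criterion of properness in concrete form. Integral closedness of $V$ gives $\widetilde{A_i}\subseteq V$, and $\mm_V\cap\widetilde{A_i}$ is a nonzero prime: it is nonzero because otherwise $V\supseteq Q(\widetilde{A_i})=F$. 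Hence it is a maximal ideal $\mm$, and $(\widetilde{A_i})_\mm\subseteq V$ is a DVR dominated by $V$. Both are valuation rings of $F$, so they are equal.

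If the author's $\Zar$ includes the trivial valuation ring $F$, it should correspond to the generic point of $\widetilde{\overline{\xx}}$. I will either add that point or note that "points" refers to closed points and exclude $F$, matching whichever convention the paper uses later.
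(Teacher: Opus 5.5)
Your proof is correct, but for surjectivity it takes a different route from the paper.

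\textbf{The paper's route.} The paper invokes the equivalence of categories between normal projective curves and function fields of transcendence degree one. This identifies the normalization of $\overline{\xx}$ with the curve $\yy$ attached to $F$. It then gets surjectivity by choosing any $a_1,\ldots,a_n\in V$ with $Q(K[a_1,\ldots,a_n])=F$ and taking the center of $V$ on the integral closure of $K[a_1,\ldots,a_n]$. That step tacitly uses that this normal affine curve is an open subset of $\yy$, which is where completeness is hidden in the black box.

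\textbf{Your route.} You build the normalization concretely by gluing the integral closures $\widetilde{A_i}$ of the standard charts of $\overline{\xx}$. You then prove surjectivity by an explicit valuative criterion: take the chart where $v(x_i/x_0)$ is minimal. This choice automatically discards charts with $\overline{\xx}\subseteq\{x_i=0\}$, since $v(1)=0$. So you never need the existence or uniqueness of the normal projective model, nor that every normal affine model embeds in it. Your argument is more self-contained and makes explicit the step the paper leaves implicit; the paper's is shorter. Injectivity is handled the same way in both: the paper simply asserts it, and you attribute it to separatedness.

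\textbf{Minor points.}
\begin{itemize}
\item Your first step, that every nontrivial $V$ is a DVR, is superfluous. The surjectivity argument already gives $V=(\widetilde{A_i})_{\mm}$.
\item As written, that first step also lacks the reason why $V\cap K(t)\neq K(t)$. If $K(t)\subseteq V$, then every element of $F$ is integral over $K(t)\subseteq V$, and $V$ is integrally closed, so $V=F$.
\item The convention you anticipate is the one the paper uses later, in the proof of Theorem~\ref{teor:ptoinf}: points of the normalization correspond to the valuation rings different from $F$.
\end{itemize}
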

\begin{proof}
There is an equivalence of categories between normal curves and finitely generated field extensions of transcendence degree $1$ over $K$ (see e.g. \cite[0BY1]{stacks-project}). Let $\yy$ be the curve corresponding to $Q(K[\xx])$; then, $\yy$ is the normalization of $\xx$. If $P$ is a point of $\yy$, then $\local_{\yy,P}$ is a DVR, and different points correspond to different valuation rings. If $V\in\Zar(Q(K[\xx])|K)$, then there are $a_1,\ldots,a_n\in V$ such that $Q(K[\xx])=Q(K[a_1,\ldots,a_n])$; then, the center of $V$ on the integral closure of $K[a_1,\ldots,a_n]$ corresponds to a point of $\yy$. The claim is proved.

\end{proof}

\begin{lemma}\label{lemma:ic-algebrefg}
Let $D$ be an integral domain, and $D\subseteq T\subseteq Q(D)$ be a ring such that $(D:T)\neq(0)$. Then, $\recip(D)=Q(D)$ if and only if $\recip(T)=Q(D)$. In particular, if $D$ is a finitely generated $K$-algebra for some field $K$ and $\overline{D}$ is its integral closure, then  $\recip(D)=Q(D)$ if and only if $\recip(\overline{D})=Q(D)$.
\end{lemma}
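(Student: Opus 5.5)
Since $D\subseteq T$, every reciprocal of a nonzero element of $D$ is a reciprocal of a nonzero element of $T$, so $\recip(D)\subseteq\recip(T)$. Hence $\recip(D)=Q(D)$ immediately gives $\recip(T)=Q(D)$. The content of the lemma is the converse, and the plan is to use a nonzero conductor element $c\in(D:T)$ to pull reciprocals of elements of $T$ back into $\recip(D)$.

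Assume $\recip(T)=Q(D)$ and fix $0\neq c\in(D:T)$, so that $cT\subseteq D$. First I show that $c\,\recip(T)\subseteq\recip(D)$, or more precisely that $c^k y\in\recip(D)$ for suitable powers. For $0\neq t\in T$ we have $ct\in D\setminus\{0\}$, so $1/(ct)\in\recip(D)$, and also $1/c\in\recip(D)$ since $c\in D$. The identity
\[
\frac1t=c\cdot\frac1{ct}
\]
shows $1/t\in c\,\recip(D)\subseteq\recip(D)[c]$. Note that $c\in D$, but $c$ need not lie in $\recip(D)$. Every element of $\recip(T)$ is a polynomial in finitely many $1/t_i$, so $\recip(T)\subseteq\recip(D)[c]$, and therefore $Q(D)=\recip(D)[c]$.

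Now I use that $\recip(D)$ is local with maximal ideal $\mm$, by \cite[Theorem 2.6]{EGL-RD-polinomyal}. Since $1/c\in\recip(D)$ and $\recip(D)[c]=\recip(D)[1/(1/c)]$ is a field, we get $Q(D)=\recip(D)_{1/c}$. If $1/c$ is a unit of $\recip(D)$, then $\recip(D)=Q(D)$ and we are done.

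Otherwise I replace $c$ by a better conductor element. The conductor is an ideal of $T$, so $c(1+c)\in(D:T)$ whenever $1+c\neq0$. If $1+c=0$ then $c=-1$, and $1/c=-1$ is already a unit. So take $c'=c+c^2\in(D:T)\setminus\{0\}$. The argument above gives $Q(D)=\recip(D)_{1/c'}$. Then
\[
\frac1{c'}=\frac1c-\frac1{1+c},
\]
and it suffices to find a conductor element whose reciprocal is a unit. A cleaner route is to check whether $1/c\in\mm$ and $1/(1+c)\in\mm$ can both hold. We have
\[
\frac1c\cdot\frac1{1+c}=\frac1c-\frac1{1+c}.
\]
This identity alone does not settle it, so I would argue differently. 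In a local domain $R$, if $R[1/a]=Q(R)$ for some $a\in\mm$, then every nonzero prime contains $a$. Apply this to both $a=1/c$ and $a=1/(1+c)$, using that $1+c$ is also a conductor element with $c(1+c)$ in the conductor and multiplying as before. Every nonzero prime then contains both $1/c$ and $1/(1+c)$. But
\[
\frac1c-\frac1{1+c}=\frac1{c(1+c)},
\]
and I expect the main obstacle to be ruling this out. The fallback is to observe that $(D:T)$, being a nonzero ideal of $D$, contains $c$ and $c\cdot d$ for every $d\in D$. Choosing $d$ so that $1/(cd)$ is a unit, for instance when $\recip(D)$ has $1+1/x$-type units, is where the real work lies.

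For the \emph{in particular} statement, take $T=\overline D$. For a finitely generated $K$-algebra this is a finite $D$-module, by finiteness of integral closure for affine domains (Noether). Clearing the denominators of a finite generating set gives a nonzero element of $(D:\overline D)$, and the first part applies.
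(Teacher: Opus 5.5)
\textbf{There is a genuine gap in the converse direction.} You show $\recip(T)\subseteq\recip(D)[c]$ and hence $Q(D)=\recip(D)[c]$, and then try to prove that $1/c$ is a unit of $\recip(D)$. None of the later attempts finishes: not the element $c+c^2$, not the primes containing $1/c$ and $1/(1+c)$, not the ``fallback'' with $1/(cd)$, as you yourself acknowledge. The trouble is that this inclusion multiplies by $c$, which need not lie in $\recip(D)$. So you end up with a localization of $\recip(D)$ and a unit question you have no tool to settle.

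\textbf{The missing idea is to divide by $c$ instead of multiplying by it.} Since $\frac1{t_1}\cdot\frac1{t_2}=\frac1{t_1t_2}$ and $-\frac1t=\frac1{-t}$, every element of $\recip(T)$ is a finite sum $\frac1{y_1}+\cdots+\frac1{y_n}$ with $y_i\in T\setminus\{0\}$. Given $x\in Q(D)$, apply the hypothesis $\recip(T)=Q(D)$ to $cx$ rather than to $x$, and write $cx=\sum_i\frac1{y_i}$. Dividing by $c$ gives $x=\sum_i\frac1{cy_i}$. Each $cy_i$ is a nonzero element of $D$ because $c\in(D:T)$, so $x\in\recip(D)$.

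Put differently, the useful inclusion is $\frac1c\recip(T)\subseteq\recip(D)$. You already had its ingredient, $\frac1{ct}=\frac1c\cdot\frac1t\in\recip(D)$. Since $\frac1c Q(D)=Q(D)$, this gives $\recip(D)=Q(D)$ directly, with no use of the locality of $\recip(D)$ and no unit argument. This is exactly the paper's proof.

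Your forward direction is fine and matches the paper. So is the ``in particular'' statement, where finiteness of $\overline{D}$ over $D$ yields a nonzero conductor element.
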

\begin{proof}
If $\recip(D)=Q(D)$, then $Q(D)=\recip(D)\subseteq\recip(T)\subseteq Q(D)$ and thus $\recip(T)=Q(D)$.

Suppose $\recip(T)=Q(D)$, and let $c\in(D:T)\setminus\{0\}$. For any $x\in Q(D)$, there are $y_1,\ldots,y_n\in T$ such that
\begin{equation*}
cx=\inv{y_1}+\cdots+\inv{y_n};
\end{equation*}
dividing the previous equality by $c$, we get
\begin{equation*}
x=\inv{cy_1}+\cdots+\inv{cy_n}\in\recip(D)
\end{equation*}
since $cy\in D$ for every $y\in T$. Hence $\recip(D)=Q(D)$, as claimed.

The ``in particular'' statement follows from the fact that, if $D$ is a finitely generated $K$-algebra, then $(D:\overline{D})\neq(0)$ \cite[Corollary 13.13]{eisenbud}.
\end{proof}

The following lemma is the geometric lynchpin of the proof of Proposition \ref{prop:zarminus1} and Theorem \ref{teor:ptoinf}; we isolate it since it has independent algebraic interest.
\begin{lemma}\label{lemma:zarminus1}
Let $K$ be a field, $F$ a finitely generated extension of $K$ of transcendence degree $1$. For every $V\in\Zar(F|K)$, $V\neq F$, the domain
\begin{equation*}
A:=\bigcap\{W\mid W\in\Zar(F|K), W\neq V\}
\end{equation*}
has quotient field $F$.
\end{lemma}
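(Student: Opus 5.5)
We will prove that $A$ is the ring of functions on the normal projective curve minus a point, i.e.\ an affine coordinate ring, so its quotient field is $F$. Let $\yy$ be the normal projective curve with function field $F$. By the Proposition above, the points of $\yy$ correspond bijectively to the elements of $\Zar(F|K)$ different from $F$. For each such point the local ring is a DVR $W$, and its valuation is denoted $v_W$. Let $P$ be the point corresponding to $V$. Then
\begin{equation*}
A=\{f\in F\mid v_W(f)\geq 0 \text{ for all } W\neq V\},
\end{equation*}
that is, $A$ is the ring of rational functions on $\yy$ whose only possible pole is at $P$.

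The key step is to produce a nonconstant $f\in A$. We will do this via Riemann--Roch, or more elementarily via finiteness of poles. Take any $g\in F$ transcendental over $K$. Then $g$ has finitely many poles $P_1,\ldots,P_r$, as one sees from the integral closure of $K[g]$ being a finitely generated module.

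It is cleaner to argue directly with Riemann--Roch on $\yy$, which is valid over an arbitrary field $K$ using degrees of closed points. Put $d=\deg P=[\kappa(P):K]$. The space $L(nP)$ has dimension at least $nd+1-g$, where $g$ is the genus. This dimension exceeds $1$ once $n$ is large, so some $f\in L(nP)\setminus K$ exists, and any such $f$ lies in $A$. If we prefer to avoid Riemann--Roch, there is an alternative: choose $h$ with a zero at $P$ (possible since $V\neq F$), take $y$ in the maximal ideal of $V$, and use the approximation theorem to handle the other poles. We expect the Riemann--Roch route to be the main step, and its only subtlety is the non-algebraically-closed base field. This is handled by the standard version for curves over arbitrary fields, where $\dim_K L(D)\geq\deg D+1-g$.

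Given a nonconstant $f\in A$, $f$ is transcendental over $K$, since constants algebraic over $K$ would have no poles. So $F$ is finite over $K(f)$. We then show $A$ is the integral closure $B$ of $K[f]$ in $F$.
\begin{itemize}
\item Every $W\neq V$ contains $f$, hence contains $B$, so $B\subseteq A$.
\item Conversely, the valuations containing $K[f]$ are exactly those with $v_W(f)\geq0$. These are all $W\neq V$, since $f$ has a pole at $P$ (it is nonconstant, so it has some pole, and the only possibility is $P$). Since $B$ is the intersection of the valuation rings containing it, $A\subseteq B$.
\end{itemize}
Hence $A=B$, which is the integral closure of $K[f]$ in the finite extension $F$ of $K(f)$. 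Its quotient field is therefore $F$, because every element of $F$ is a quotient of an element of $B$ by a nonzero element of $K[f]$.
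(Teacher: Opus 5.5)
Your proof follows the paper's route. Riemann--Roch on the normal projective model gives a transcendental $f$ whose only pole is at $P$, so $f\in A$; then the integral closure $B$ of $K[f]$ in $F$ lies in $A$ and has quotient field $F$. Your extra check that in fact $A=B$ is correct, though only $B\subseteq A$ is needed.

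There is one step to repair. From $\dim_K L(nP)>1$ you only get some $f\in L(nP)\setminus K$. When $K$ is not algebraically closed in $F$, such an $f$ need not be transcendental. For example, take $K=\mathbb{R}$ and $F=\mathbb{C}(x)$: the element $i$ lies in every $L(nP)$ (already $\dim_{\mathbb{R}}L(0)=2$), but it is algebraic and has no pole. Your chain ``nonconstant, so it has a pole, so it is transcendental'' silently treats $f\notin K$ as if it meant $f$ transcendental.

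The paper avoids this by first replacing $K$ with its algebraic closure $K'$ in $F$. This changes nothing, since every $W\in\Zar(F|K)$ contains $K'$. It also makes $K$ the full constant field, so the form $\ell(D)\geq\deg D+1-g$ of Riemann--Roch that you quote holds as stated. Over a general $K$ that form needs adjusting; only the linear growth in $n$ matters.

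Alternatively, keep $K$ and use $L(0)=K'$ together with that linear growth. Choose $n$ with $\dim_K L(nP)>[K':K]$. Then any $f\in L(nP)\setminus K'$ is transcendental, and the rest of your argument goes through unchanged.
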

\begin{proof}
Each $W\in\Zar(F|K)$ is integrally closed and has quotient field $F$; thus, each of them contains the algebraic closure of $K$ in $F$. Hence, we can suppose without loss of generality that $K$ is algebraically closed in $F$.

Let $\xx$ be a normal projective curve over $K$ whose field of rational functions is $F$, and let $P$ be the point of $\xx$ corresponding to $V$. Take a divisor $tP$, with $t\inN^+$; for sufficiently large $t$, we have $\ell(W-tP)=0$, where $W$ is the canonical divisor of $\xx$ and $\ell(E)$ is the dimension of the $K$-vector space of the $f\in K[\xx]$ such that $\divv f+E\geq 0$. Let $g$ be the genus of $\xx$. By the Riemann-Roch theorem, for large $t$ we have
\begin{equation*}
\ell(tP)=\deg(tP)+1-q=t\deg P+1-g>1.
\end{equation*}
Therefore, there are linearly independent functions $f_1,f_2\in F$ such that $\divv f_i+kP\geq 0$ for $i=1,2$; in particular, at least one of them, say $f_1$, must be transcendental over $K$. Then, $f_1\in A$ since $f_1$ is regular on $\xx\setminus\{P\}$: therefore, the integral closure $B$ of $K[f_1]$ in $F$ is contained in $A$. However, $K(f_1)\subseteq F$ is an algebraic extension; therefore, $Q(B)=F$. Hence also $Q(A)=F$, as claimed.
\end{proof}

\begin{prop}\label{prop:zarminus1}
Let $K$ be a field and let $D$ be an integral finitely generated $K$-algebra of dimension $1$; let $F$ be the quotient field of $D$. Then, $\recip(D)=F$ if and only if $|\Zar(F|K)\setminus\Zar(F|D)|>1$.
\end{prop}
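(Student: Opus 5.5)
By Lemma \ref{lemma:ic-algebrefg} we may replace $D$ by its integral closure $\overline{D}$. This changes neither $\recip$ being $F$ nor the set $\Zar(F|D)$, since every valuation ring containing $D$ contains $\overline{D}$. So we assume $D$ is integrally closed, hence a Dedekind domain, and $D=\bigcap\Zar(F|D)$. The elements of $\Zar(F|K)\setminus\Zar(F|D)$ are the points at infinity of the normal projective model, i.e. the valuations $V_1,\dots,V_k$ of the normalization of $\overline{\xx}$ not centered on the affine part. This set is finite and nonempty: it is nonempty because $D\neq K$ is not contained in every valuation ring of $F|K$, as a nonconstant $f\in D$ has a pole somewhere. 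It is finite because any nonconstant $f\in D$ has only finitely many poles and every missing valuation is a pole of some generator of $D$.

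For the case $k=1$, write $V:=V_1$ and let $v$ be its valuation. Every nonzero $d\in D$ lies in all $W\in\Zar(F|D)$, and $v(d)\le 0$, since a function regular everywhere except at $P$ must have a pole at $P$ unless it is constant. Hence $1/d$ satisfies $v(1/d)\ge 0$, so $1/d\in V$. It follows that $\recip(D)\subseteq V\neq F$, and therefore $\recip(D)\neq F$.

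For the case $k\ge 2$, we must show $\recip(D)=F$. It suffices to show $\recip(D)$ contains $D$, since then $\recip(D)$ contains $D$ and $1/d$ for all $d$, and so equals $F$. The key is to find many elements of $D$ whose reciprocals are sums of reciprocals. The idea, as in the curve case of \cite{reciprocal-curves}, is to use Lemma \ref{lemma:zarminus1}. Let $A_i:=\bigcap\{W\neq V_i\}$; this has quotient field $F$, and it consists of the functions regular away from $P_i$. Since $k\ge2$, $A_1\subseteq \bigcap_{W\in\Zar(F|D)}W=D$, so $A_1\subseteq D$, and similarly $A_2\subseteq D$. Take $x\in A_1$ nonconstant, so $x$ has poles only at $P_1$, and take $y\in A_2$ nonconstant, with poles only at $P_2$. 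We then want identities like
\[
\frac{1}{x}-\frac{1}{x+1}=\frac{1}{x(x+1)}
\]
to show that elements with poles at a single point become units or invertible elements of $\recip(D)$. Concretely, $1/x\in\recip(D)$. Every element of $A_1$ is of the form $a/b$ with $a,b\in K[x]$-style combinations; more precisely, $F=Q(A_1)$, so every $z\in F$ is $a/b$ with $a,b\in A_1\subseteq D$. Thus it suffices to show $A_1\subseteq\recip(D)$, because then $1/b\in\recip(D)$ and $a\in\recip(D)$ give $z\in\recip(D)$.

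To show $a\in\recip(D)$ for $a\in A_1$, we consider $1/(a y^n)$ and similar expressions. The function $1+ay$ has poles at both $P_1$ and $P_2$. Following the strategy of the algebraically closed case, we aim to write $a$ as $1/u-1/w$ with $u,w\in D$. Take $u=1/(a+\frac1{y})\cdots$: set $u:=y$ and $w:=y/(1+ay)$, which gives
\[
\frac1y-\frac{1+ay}{y}=-a.
\]
We therefore need $y/(1+ay)\in D$. This element has no poles at affine points, provided $1+ay$ has no zeros there, and that condition is the obstacle. A modified choice is $u=y^m$, $w=y^m/(1+ay^m)$, i.e. the reciprocal of $y^{-m}+a$. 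We need $y^{-m}+a$ to be the reciprocal of an element of $D$, meaning it has no zeros on the affine part. The main difficulty is arranging this: its zeros away from $P_1,P_2$ must be avoided. The fallback is the original paper's argument, which shows $1/a\in\recip(D)$ for all $a$ with poles only at $P_1$ and zeros controlled via the Riemann--Roch spaces $\ell(tP_1+sP_2)$, choosing $t,s$ large to realize prescribed divisors supported at infinity. I expect this construction of elements of $D$ with prescribed zero behavior to be the main obstacle, and I would follow \cite{reciprocal-curves} closely there, replacing points by closed points of degree $\deg P$.
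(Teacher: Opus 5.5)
Your argument for the case $k=1$ is correct and is the paper's argument. The converse direction has a genuine gap. Everything hinges on showing $A_1\subseteq\recip(D)$, and the identity you propose cannot give this in general.

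Writing $-a=\frac{1}{y^m}-\frac{1+ay^m}{y^m}$ requires $y^m/(1+ay^m)\in D$. At an affine zero $Q$ of $1+ay^m$ we have $a(Q)y(Q)^m=-1$, so $y(Q)\neq 0$ and the quotient has a pole at $Q$. Hence you need $1+ay^m$ to be a \emph{unit} of $D$.

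This fails when $D$ has only constant units while having two points at infinity. The curve $\xx\setminus\{P\}$ with $[\mm]$ non-torsion, from Section~\ref{sect:general}, is such a case. There the requirement forces $ay^m$ to be constant, which is false for a general nonconstant $a\in A_1$. The ``fallback'' to the Riemann--Roch spaces $\ell(tP_1+sP_2)$ defers to \cite{reciprocal-curves} without giving an argument. So the implication $k\geq 2\Rightarrow\recip(D)=F$ is not established.

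The missing idea is that you need no explicit representations as sums of reciprocals; a valuation-theoretic argument suffices.

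\textbf{Reduction.} Since $1/c\in\recip(D)$ for every $c\in K\setminus\{0\}$, we have $K\subseteq\recip(D)$. If $\recip(D)\neq F$, a nonzero maximal ideal of $\recip(D)$ is dominated by some $Z\in\Zar(F|K)$ with $Z\neq F$. So it suffices to exhibit, for each such $Z$, a nonzero $f\in D\cap\mm_Z$, because then $1/f\in\recip(D)\setminus Z$.

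\textbf{Finite points.} If $Z\in\Zar(F|D)$, any nonzero element of $D\cap\mm_Z$ works. Such an element exists because $Q(D)=F$.

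\textbf{Points at infinity.} If $Z=V_i$, pick $j\neq i$, which is possible exactly because $k\geq 2$. Your ring $A_j$ satisfies $A_j\subseteq D\cap V_i$, and $Q(A_j)=F$ by Lemma~\ref{lemma:zarminus1}. Therefore $A_j\cap\mm_{V_i}\neq(0)$: otherwise every nonzero element of $A_j$ would be a unit of $V_i$, forcing $F\subseteq V_i$.

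Concretely, choose your $y\in A_2$ so that it vanishes at $P_1$; then $1/y\in\recip(D)\setminus V_1$. This is exactly the paper's proof, and it uses only ingredients you had already assembled: the reduction to $D$ integrally closed, $A_j\subseteq D$, and Lemma~\ref{lemma:zarminus1}.
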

\begin{proof}%[Second proof of Proposition \ref{prop:zarminus1}]
Suppose first that $|\Zar(F|K)\setminus\Zar(F|D)|$ has a single element, say $V$. Then,
\begin{equation*}
K\subseteq D\cap V\subseteq\bigcap_{W\in\Zar(F|D)}W\cap V\subseteq\bigcap_{W\in\Zar(F|K)}W=K.
\end{equation*}
Therefore, if $f\in D\setminus K$ then $f\notin V$, and in particular $1/f\in V$. Thus $\recip(D)\subseteq V$, and in particular $\recip(D)\neq F$.

Conversely, suppose that $|\Zar(F|K)\setminus\Zar(F|D)|>1$; by Lemma \ref{lemma:ic-algebrefg}, we can suppose without loss of generality that $D$ is integrally closed. We show that no $Z\in\Zar(F|K)$ contains $\recip(D)$. If $Z\in\Zar(D)$ this is obvious since $1/f\notin Z$ whenever $0\neq f\in\mm_Z\cap D$ (the intersection is nonzero since $F=Q(D)$). Suppose $Z\notin\Zar(D)$, and let $A:=D\cap Z$: then,
\begin{equation}\label{eq:zarminus1-geo}
A=\bigcap_{W\in\Zar(F|D)\cup\{Z\}}W\supseteq\bigcap_{W\in\Zar(F|K)\setminus\{V\}}W,
\end{equation}
where $V\neq Z$ belongs to $\Zar(F|K)\setminus\Zar(F|D)$ (such an element exists by the hypothesis $|\Zar(F|K)\setminus\Zar(F|D)|>1$). The rightmost intersection of \eqref{eq:zarminus1-geo} has quotient field $F$ by Lemma \ref{lemma:zarminus1}; in particular, $\mm_Z\cap A\neq(0)$. If $f\in\mm_Z\cap A$, then $1/f\in\recip(A)\subseteq\recip(D)$ while $1/f\notin Z$. Hence $\recip(D)\nsubseteq Z$. It follows that $\recip(D)$ must be the whole $F$, as claimed.
\end{proof}

\begin{oss}
The second part of the previous proof can be interpreted geometrically in the following way: $\xx$ is a curve with (at least) two points at infinity; take one of them, say $P$ (corresponding to the valuation domain $Z$). There is a non-constant $f\in K[\xx]$ that is zero in $P$: then, $1/f\in\recip(K[\xx])$ but $1/f$ has a pole in $Q$, i.e., $1/f\notin Z$.
\end{oss}

We can express the previous proposition geometrically in the following way; note that this is just \cite[Theorem 2.1]{reciprocal-curves} without the hypothesis that the base field is algebraically closed.
\begin{teor}\label{teor:ptoinf}
Let $K$ be a field and $D$ be a finitely generated one-dimensional $K$-algebra. Then, the following are equivalent:
\begin{enumerate}[(i)]
\item $\recip(D)$ is a field;
\item if $\xx$ is a realization of $D$ that is regular at infinity, then $|\xx_\infty|=1$;
\item if $\xx$ is a realization of $D$ and $\nu:Y\longrightarrow\overline{\xx}$ is a normalization, then $|\nu^{-1}(\xx_\infty)|=1$.
\end{enumerate}
Moreover, if these conditions hold and $\xx_\infty=\{p\}$, then the integral closure of $\recip(K[\xx])$ is the local ring $\local_{\overline{\xx},p}$.
\end{teor}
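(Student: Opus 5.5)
The plan is to reduce everything to the cardinality of $\Zar(F|K)\setminus\Zar(F|D)$, where $F=Q(D)$, and then read condition (i) off Proposition \ref{prop:zarminus1}. Fix a realization $\xx$ with $K[\xx]\simeq D$, let $\overline{\xx}$ be its projective closure and $\nu:Y\longrightarrow\overline{\xx}$ a normalization.

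\textbf{Step 1: the bridge.} Using the first proposition of this section, points of $Y$ correspond bijectively to $\Zar(F|K)$. A point $P\in Y$ lies over the affine part $\xx$ if and only if $\local_{Y,P}$ contains $K[\xx]=D$. Indeed, if $\nu(P)\in\xx$, then regular functions on $\xx$ are regular at $P$. Conversely, if $D\subseteq\local_{Y,P}$, the center of $\local_{Y,P}$ on $D$ is a maximal ideal of $D$, so $\nu(P)\in\xx$. Hence
\begin{equation*}
\nu^{-1}(\xx_\infty)\ \longleftrightarrow\ \Zar(F|K)\setminus\Zar(F|D),
\end{equation*}
and this count is independent of the chosen realization.

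\textbf{Step 2: (ii)$\Leftrightarrow$(iii).} If $\xx$ is regular at infinity, then each $p\in\xx_\infty$ is a regular point. Its local ring is then a DVR, so it coincides with the unique valuation ring of $F$ dominating it, and $|\nu^{-1}(p)|=1$. Thus $|\nu^{-1}(\xx_\infty)|=|\xx_\infty|$ for such realizations. This gives (iii)$\Rightarrow$(ii) directly. For (ii)$\Rightarrow$(iii), I would first produce one realization regular at infinity, for instance by re-embedding $\xx$ suitably; this is the usual construction and I would take it from \cite{reciprocal-curves}. Step 1 then transfers the count to an arbitrary realization.

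\textbf{Step 3: (i) and the count.} Proposition \ref{prop:zarminus1} gives the dichotomy: $\recip(D)=F$ exactly when the count is $>1$. When the count is $1$, the proof of that proposition shows $\recip(D)\subseteq V$, where $V$ is the unique valuation ring at infinity. I would match (i) with the correct case of this dichotomy, checking carefully which case corresponds to "field". Combined with Steps 1 and 2, this yields the equivalence of (i) with (ii) and (iii).

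\textbf{Step 4: the ``moreover'' part.} Assume $\xx_\infty=\{p\}$ with a single valuation ring $V=\local_{Y,P}$ above it, so that $\recip(D)\subseteq V$. I expect this to be the main obstacle. One must show that the integral closure of $\recip(D)$ equals $V$, which is $\local_{\overline{\xx},p}$ when $p$ is regular. The plan is to prove that $V$ is the only valuation overring of $\recip(D)$ inside $\Zar(F|K)$, and that $\recip(D)$ is dominated by $V$.

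\emph{Domination.} For $f\in D\setminus K$, the element $1/f$ vanishes at $P$. Since $D\cap V=K$, every element of $\mm_V$ should be reachable from such reciprocals.

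\emph{Integrality.} I would show that $V$ is integral over $\recip(D)$. Take $g\in V$ and write it as a quotient of elements of $D$ vanishing only at infinity. Then $g$ satisfies a monic equation over $K[1/f]_{(1/f)}\subseteq\recip(D)$, obtained from the finite extension $K(f)\subseteq F$ in which $P$ is the unique point over $f=\infty$. Since $P$ is unique, $V$ is the localization of the integral closure of $K[1/f]$ at that point. The delicate part is avoiding the localization, i.e. showing that denominators can be taken as units of $\recip(D)$, using that $\recip(D)$ is local with maximal ideal contracting to $\mm_V$.
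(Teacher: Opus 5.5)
Your reduction is the paper's. You use the bridge proposition to identify the points of $Y$ over $\xx_\infty$ with $\Zar(F|K)\setminus\Zar(F|D)$ (the trivial valuation ring $F$ has to be excluded from that bijection), and then apply Proposition~\ref{prop:zarminus1}.

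\textbf{Step 3 cannot stay open.} Proposition~\ref{prop:zarminus1} says the count is $1$ exactly when $\recip(D)\subseteq V$, that is, exactly when $\recip(D)$ is \emph{not} a field. So with (i) as literally written, the equivalence you announce is false: (i) is equivalent to the count being $>1$, which is the failure of (iii). The statement has a typo, and (i) should read ``$\recip(D)$ is not a field'' (equivalently $\dim\recip(D)=1$). There are three reasons to read it this way:
\begin{itemize}
\item it is precisely what the paper's proof concludes;
\item it is how the theorem is used later (Proposition~\ref{prop:Dcaprecip}, and the rephrasing ``$\dim\recip(K[\xx])=1$ iff $\xx_\infty$ is irreducible'');
\item it is the only reading under which the ``moreover'' clause makes sense.
\end{itemize}
Your Step~4 already uses $\recip(D)\subseteq V$, so you are implicitly proving the corrected statement. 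Say so explicitly rather than ``checking which case corresponds to field''.

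Your remark in Step~2 is a point the paper's proof passes over silently. Namely, (ii)$\Rightarrow$(iii) needs at least one realization regular at infinity, since otherwise (ii) is vacuous. Keeping it is an improvement.

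\textbf{The ``moreover'' part.} The paper only cites \cite[Theorem 2.1]{reciprocal-curves}. Your sketch makes this look harder than it is, and the ``delicate part'' is illusory. Two short arguments each finish it.

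First argument, via valuation overrings. Since $K\subseteq\recip(D)$, every valuation overring of $\recip(D)$ lies in $\Zar(F|K)$. A nontrivial $W\in\Zar(F|D)$ cannot contain $\recip(D)$: we have $\mm_W\cap D\neq(0)$, and $1/f\notin W$ for any $0\neq f\in\mm_W\cap D$. So the only valuation overrings are $V$ and $F$. The integral closure is the intersection of the valuation overrings, hence equals $V$.

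Second argument, via $\recip(K[f])$. Take a nonconstant $f\in D$. Then $K[1/f]_{(1/f)}=\recip(K[f])\subseteq\recip(D)$. The integral closure of this local ring in $F$ is semilocal, with maximal ideals corresponding to the poles of $f$. The only pole of $f$ is $P$, so this integral closure is already $V$, and there is nothing to localize.

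Some pieces of your sketch should be dropped. The ``domination'' paragraph is unnecessary. ``Writing $g$ as a quotient of elements of $D$ vanishing only at infinity'' does not make sense, because nonconstant elements of $D$ have poles, not zeros, at infinity.

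Your caveat about the conclusion is correct and sharper than the statement: it is $\local_{\overline{\xx},p}$ only when $p$ is regular, and in general it is its integral closure $\local_{Y,P}$.
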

\begin{proof}
The points of the normalization of $\overline{\xx}$ correspond to the points of the Zariski space $\Zar(Q(D)|K)$, excluding the quotient field $Q(D)$. Thus the condition $|\nu^{-1}(\xx_\infty)|=1$ (which reduces to $|\xx_\infty|=1$ if $\overline{\xx}$ is regular at infinity) is equivalent to $|\Zar(Q(D)|K)\setminus\Zar(Q(D)|D)|=1$, which by Proposition \ref{prop:zarminus1} is equivalent to $\recip(D)$ not being a field. Thus the three conditions are equivalent.

The last part of the statement follows as in the proof of \cite[Theorem 2.1]{reciprocal-curves}.
\end{proof}

We collect an interesting consequence of Theorem \ref{teor:ptoinf}.
\begin{prop}\label{prop:Dcaprecip}
Let $D$ be a one-dimensional finitely generated $K$-algebra. The following are equivalent:
\begin{enumerate}[(i)]
\item\label{prop:Dcaprecip:field} $\recip(D)$ is a field;
\item\label{prop:Dcaprecip:cap} $D\cap\recip(D)\neq K$;
\item\label{prop:Dcaprecip:subset} $D\subseteq\recip(D)$.
\end{enumerate}
\end{prop}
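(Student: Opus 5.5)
We prove the cycle of implications (\ref{prop:Dcaprecip:field}) $\Rightarrow$ (\ref{prop:Dcaprecip:subset}) $\Rightarrow$ (\ref{prop:Dcaprecip:cap}) $\Rightarrow$ (\ref{prop:Dcaprecip:field}). Two of these are immediate. If $\recip(D)$ is a field, then it equals $Q(D)$ (it is a domain generated inside $Q(D)$ containing all $1/x$, hence all quotients), so $D\subseteq\recip(D)$. For (\ref{prop:Dcaprecip:subset}) $\Rightarrow$ (\ref{prop:Dcaprecip:cap}): since $\dim D=1$, the ring $D$ is not algebraic over $K$. Hence $D\neq K$, and then $D\subseteq\recip(D)$ gives $D\cap\recip(D)=D\neq K$. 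Here we tacitly use $K\subseteq\recip(D)$, which holds since $1/c\in\recip(D)$ for every $c\in K^\ast$.

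The real content is (\ref{prop:Dcaprecip:cap}) $\Rightarrow$ (\ref{prop:Dcaprecip:field}), which we prove by contrapositive. Suppose $\recip(D)$ is not a field, so $\recip(D)\neq F:=Q(D)$. By Proposition \ref{prop:zarminus1}, $\Zar(F|K)\setminus\Zar(F|D)$ is empty or a single $V$. It cannot be empty: otherwise every valuation ring of $F|K$ contains $D$, so $D$ lies in the intersection of $\Zar(F|K)$. That intersection is the algebraic closure of $K$ in $F$, which is algebraic over $K$, contradicting $\dim D=1$. So the complement is exactly $\{V\}$. Exactly as in the first half of the proof of Proposition \ref{prop:zarminus1}, we then get $\recip(D)\subseteq V$ and $D\cap V\subseteq\bigcap_{W\in\Zar(F|K)}W=:K'$, the algebraic closure of $K$ in $F$. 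Therefore $D\cap\recip(D)\subseteq D\cap V\subseteq D\cap K'$.

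It remains to see that $D\cap K'=K$, or to handle the case where it is not. The main obstacle is that the proof of Proposition \ref{prop:zarminus1} silently writes $K$ for $\bigcap_{W\in\Zar(F|K)}W$, which is only correct when $K$ is algebraically closed in $F$. In the statement, ``$\neq K$'' should presumably be read as ``not contained in the algebraic closure of $K$ in $D$'' (equivalently, containing a transcendental element). Alternatively, we can reduce to the case where $K$ is algebraically closed in $F$ by replacing $K$ with $K'':=D\cap K'$, which is a field since it is a domain algebraic over $K$.

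With that reading, the argument closes as follows. Any $f\in D\cap\recip(D)$ transcendental over $K$ would lie in $D\cap V\subseteq K'$, which is impossible. So $D\cap\recip(D)$ is algebraic over $K$, i.e.\ condition (\ref{prop:Dcaprecip:cap}) fails. If one insists on the literal statement, one should also check that $D\cap\recip(D)\subseteq K$ and not merely $\subseteq K''$. This I would handle by noting that $D\cap V\subseteq K''$, which is the relevant base field; I would flag this as the delicate point and, if necessary, assume $K$ algebraically closed in $Q(D)$, as the geometric setting of a realization suggests.
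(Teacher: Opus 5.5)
Your argument is the paper's argument: the same cycle of implications, and for (ii)$\Rightarrow$(i) the same unique $V\in\Zar(F|K)\setminus\Zar(F|D)$ with $D\cap\recip(D)\subseteq D\cap V\subseteq\bigcap_{W\in\Zar(F|K)}W$. You are slightly more careful than the paper in ruling out an empty complement.

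The delicate point you flag is real, and it is a defect of the statement, not of your proof. The paper's proof silently writes $\bigcap_{W\in\Zar(F|K)}W=K$, but this intersection is the algebraic closure $K'$ of $K$ in $F$. So the check ``$D\cap\recip(D)\subseteq K$'' in your last paragraph cannot be carried out in general. Take $K=\mathbb{R}$ and $D=\mathbb{C}[X]\simeq\mathbb{R}[T,X]/(T^2+1)$:
\begin{itemize}
\item every $1/f$ with $0\neq f\in D$ lies in the valuation ring of $\mathbb{C}(X)$ at infinity, so $\recip(D)$ is not a field;
\item yet $\mathbb{C}\subseteq D\cap\recip(D)$, since $1/c\in\recip(D)$ for $c\in\mathbb{C}^\ast$.
\end{itemize}
So (ii) holds while (i) fails. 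Rather than hedging, state the needed hypothesis explicitly: $K$ is algebraically closed in $D$, i.e.\ $D\cap K'=K$. This is weaker than $K$ being algebraically closed in $F$; for example, $D=\mathbb{R}+X\mathbb{C}[X]=\mathbb{R}[X,iX]$ satisfies it. Equivalently, read (ii) as ``$D\cap\recip(D)$ contains an element transcendental over $K$''.

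Two small corrections.
\begin{itemize}
\item Replacing $K$ by $K''=D\cap K'$ does not make the base field algebraically closed in $F$. For $D=\mathbb{R}+X\mathbb{C}[X]$ one has $K''=\mathbb{R}$ but $F=\mathbb{C}(X)$. You never need that property, though. All you use is $D\cap V\subseteq D\cap K'=K''$. Moreover, since $K''$ is a field, $K''\subseteq D\cap\recip(D)$: each $c\neq 0$ in $K''$ equals $1/c^{-1}$ with $c^{-1}\in D$. So with base field $K''$ the statement is exactly right.
\item The step ``$\recip(D)\subseteq V$ exactly as in Proposition \ref{prop:zarminus1}'' also needs the $K'$ version of that argument. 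For $f\in D\setminus K'$ one gets $f\notin V$, hence $1/f\in V$. For $0\neq f\in D\cap K'$ one has $1/f\in K'\subseteq V$.
\end{itemize}
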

\begin{proof}
If $\recip(D)$ is a field, then it must be equal to the quotient field of $D$; hence, $D\subseteq\recip(D)$ and $D\cap\recip(D)=\recip(D)\neq K$. So \ref{prop:Dcaprecip:field} implies \ref{prop:Dcaprecip:cap} and \ref{prop:Dcaprecip:subset}. 

If $\recip(D)$ is not a field, then by Theorem \ref{teor:ptoinf} $\Zar(Q(D)|K)\setminus\Zar(Q(D)|D)=\{V\}$ for some valuation domain $V$ and $\recip(D)\subseteq V$. Therefore,
\begin{equation*}
D\cap\recip(D)\subseteq D\cap V\subseteq\bigcap_{\substack{W\in\Zar(Q(D)|K)\\ W\neq V}}W\cap V=\bigcap_{W\in\Zar(Q(D)|K)}W=K.
\end{equation*}
Thus \ref{prop:Dcaprecip:cap} $\Longrightarrow$ \ref{prop:Dcaprecip:field}.

Finally, \ref{prop:Dcaprecip:subset} $\Longrightarrow$ \ref{prop:Dcaprecip:cap} is obvious.
\end{proof}
\begin{comment}
\begin{prop}\label{prop:dimgeq-dim1}
Let $K\subseteq L$ be an algebraic field extension, and let $P\in\Spec(K[\XX])$ be a prime ideal. If $K[\XX]/P$ is one-dimensional and $PL[\XX]$ is a prime ideal, then
\begin{equation*}
\dim\recip\left(\frac{K[\XX]}{P}\right)\geq\dim\recip\left(\frac{L[\XX]}{PL[\XX]}\right)
\end{equation*}
\end{prop}
\begin{proof}
Let $D_K:=K[\XX]/P$ and $D_K:=L[\XX]/PL[\XX]$. If $\dim\recip(D_K)=1$ the claim follows since $\dim\recip(D_L)\leq\dim D_L=1$ by \marginparr{ref}. Suppose $\dim\recip(D_K)=0$. Since $PL[\XX]\cap K[\XX]=P$, we have an inclusion $D_K\subseteq D_L$; hence, $\recip(D_L)$ is a domain containing $Q(D_K)=\recip(D_K)$ and with quotient field $Q(D_L)$. Since the extension of quotient fields $Q(D_K)\subseteq Q(D_L)$ is algebraic, it follows that $\recip(D_L)=Q(D_L)$ has dimension $0$.
\end{proof}
\end{comment}

\begin{ex}\label{ex:circle}
Let $f(X,Y):=X^2+Y^2-1$, and consider the two domains
\begin{equation*}
D_\insR:=\frac{\insR[X,Y]}{(f)}\quad\text{and}\quad D_\insC:=\frac{\insC[X,Y]}{(f)},
\end{equation*}
and the corresponding curves $\xx_\insR$ and $\xx_\insC$. 

Then, $\xx_\insC$ has two points at infinity, namely $[0:1:i]$ and $[0:1:-i]$; therefore, $\recip(D_\insC)$ is a field.

On the other hand, $\xx_\insR$ has only one point at infinity, since the coordinates $[0:1:i]$ and $[0:1:-i]$ are conjugated over $\insR$. It follows that $\recip(D_\insR)$ has dimension $1$.

The difference between these two cases lies in the existence of the decomposition ($x,y$ are the images of $X,Y$)
\begin{equation*}
x=\frac{x}{x^2+y^2}=\inv{2(x+iy)}+\inv{2(x-iy)}
\end{equation*}
which expresses $x$ as a member of $\recip(D_\insC)$, but cannot be transferred as an equality in $\recip(D_\insR)$.
\end{ex}

\section{General results}\label{sect:general}
\begin{lemma}\label{lemma:recip-xi-1}
Let $K\subset F$ be fields and $x_1,\ldots,x_n\in F\setminus\{0\}$. Then,
\begin{equation*}
\recip(K[x_1,x_1^{-1},\ldots,x_n,x_n^{-1}])=K(x_1,\ldots,x_n).
\end{equation*}
\end{lemma}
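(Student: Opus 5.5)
Let $D:=K[x_1,x_1^{-1},\ldots,x_n,x_n^{-1}]$. Since $\recip(D)$ is generated by inverses of nonzero elements of $D$, it lies in $Q(D)=K(x_1,\ldots,x_n)$. For the reverse inclusion, note that $\recip(D)$ is a ring, so it is enough to show two things: that it contains $D$, and that it contains $1/d$ for every nonzero $d\in D$. The second is immediate from the definition. Once both hold, every quotient $a/b$ with $a,b\in D$, $b\neq 0$, equals $a\cdot(1/b)$ and so lies in $\recip(D)$. Since $Q(D)=K(x_1,\ldots,x_n)$, this gives equality.

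To show $D\subseteq\recip(D)$, it suffices to show that $K$ and each $x_i^{\pm1}$ lie in $\recip(D)$, since these generate $D$ as a ring.
\begin{itemize}
\item Each $x_i^{\pm1}$ is a unit of $D$, so $x_i=1/x_i^{-1}$ and $x_i^{-1}=1/x_i$ are reciprocals of nonzero elements of $D$.
\item For $c\in K$, $c\neq0$, we have $c=1/c^{-1}$ with $c^{-1}\in D$, and $0\in\recip(D)$ trivially.
\end{itemize}

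So the proof is essentially a one-line observation: $D$ is generated by units, and units are reciprocals of elements of $D$. The only point needing care is that $\recip(D)$ is closed under multiplication, which holds because it is by definition the ring generated by the reciprocals.
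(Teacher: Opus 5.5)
Your proof is correct, and it takes a more direct route than the paper. The paper splits into cases according to whether each $x_i$ is algebraic or transcendental over $K$. In the algebraic case $K[x_i,x_i^{-1}]=K(x_i)$ is already a field. In the transcendental case it cites \cite[Proposition 7]{GLO-egyptian} for $\recip(K[x_i,x_i^{-1}])=K(x_i)$. It then concludes that $\recip(D)$, which contains every $K(x_i)$, equals $K(x_1,\ldots,x_n)$.

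You instead observe that every ring generator of $D$ (the nonzero constants and the $x_i^{\pm1}$) is a unit of $D$, hence is itself the reciprocal of an element of $D$. This gives $D\subseteq\recip(D)$ at once, and writing $a/b=a\cdot(1/b)$ then yields all of $Q(D)$. Your argument is self-contained and needs neither the case distinction nor the external citation. It actually proves the more general fact that any domain generated as a ring by its units satisfies $\recip(D)=Q(D)$.

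It also spells out the final step that the paper leaves implicit. Containing the subfields $K(x_1),\ldots,K(x_n)$ does not by itself force a ring to be the full field $K(x_1,\ldots,x_n)$. For instance, the subring generated by $K(x_1)$ and $K(x_2)$ does not contain $1/(x_1+x_2)$ when $x_1,x_2$ are algebraically independent. So one really needs $D\subseteq\recip(D)$ together with the reciprocals $1/b$, exactly as you argue.
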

\begin{proof}
We first note that $K(x_1,\ldots,x_n)$ is the quotient field of $D:=K[x_1,x_1^{-1},\ldots,x_n,x_n^{-1}]$, and thus $\recip(D)\subseteq K(x_1,\ldots,x_n)$. Take any $i=1,\ldots,n$. If $x_i$ is algebraic over $K$, then $K[x_i,x_i^{-1}]=K[x_i]=K(x_i)$. If $x_i$ is transcendental over $K$, then $K[x_i]$ is isomorphic to a polynomial ring over $K$, and $\recip(K[x_i,x_i^{-1}])=K(x_i)$ by \cite[Proposition 7]{GLO-egyptian}. Therefore, $\recip(D)$ contains $K(x_1),\ldots,K(x_n)$, and thus it is $K(x_1,\ldots,x_n)$.
\end{proof}

If $D$ is an integral domain, we denote by $\unit(D)$ the set of units of $D$.
\begin{prop}\label{prop:units}
Let $K$ be a field and $D$ be a finitely generated $K$-algebra. Then,
\begin{equation*}
\dim\recip(D)\leq\dim(D)-\trdeg(K(\unit(D))/K).
\end{equation*}
\end{prop}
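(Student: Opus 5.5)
Let $t:=\trdeg(K(\unit(D))/K)$ and choose units $u_1,\ldots,u_t\in\unit(D)$ that are algebraically independent over $K$. The plan is to show that $\recip(D)$ contains the field $L:=K(u_1,\ldots,u_t)$, and then to bound the dimension of $\recip(D)$ by viewing it as an algebra over $L$.

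The first step is the containment $L\subseteq\recip(D)$. Since the $u_i$ are units, $D_0:=K[u_1,u_1^{-1},\ldots,u_t,u_t^{-1}]$ is a subring of $D$, so $\recip(D_0)\subseteq\recip(D)$. By Lemma \ref{lemma:recip-xi-1}, $\recip(D_0)=K(u_1,\ldots,u_t)=L$. Hence $L\subseteq\recip(D)$, and $\recip(D)$ is an $L$-algebra. The same argument shows that $\recip(D)$ also contains $LD$, because it contains each $1/d$ and hence $1/(\ell d)$ for $\ell\in L^\ast$.

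The second step is to realize $\recip(D)$ as a reciprocal complement over $L$. Consider the ring $D':=L[D]=LD$, the localization of $D$ at the multiplicative set $K[u_1,\ldots,u_t]\setminus\{0\}$. It is a finitely generated $L$-algebra (generated by the $K$-algebra generators of $D$) with the same quotient field $F=Q(D)$. Its dimension is
\[
\dim D'=\trdeg(F/L)=\trdeg(F/K)-t=\dim D-t .
\]
I would then show $\recip(D')=\recip(D)$. The inclusion $\recip(D)\subseteq\recip(D')$ is clear. Conversely, an element of $D'\setminus\{0\}$ has the form $d/s$ with $d\in D$ and $s\in K[u_1,\ldots,u_t]\setminus\{0\}$. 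Its inverse $s/d=s\cdot(1/d)$ lies in $\recip(D)$, since $s\in L\subseteq\recip(D)$ and $1/d\in\recip(D)$. Thus $\recip(D')\subseteq\recip(D)$.

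The final step applies the earlier-cited bound \cite[Theorem 3.2]{guerrieri-recip} to the finitely generated $L$-algebra $D'$:
\[
\dim\recip(D)=\dim\recip(D')\leq\dim D'=\dim D-t .
\]
The main point needing care is the computation $\dim D'=\dim D-t$. This holds because $D'$ is an affine domain over $L$, and for such domains the Krull dimension equals the transcendence degree of the quotient field over the base field, here $\trdeg(F/L)$. Everything else is formal once Lemma \ref{lemma:recip-xi-1} supplies $L\subseteq\recip(D)$.
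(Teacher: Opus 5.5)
Your proof is correct and follows essentially the same route as the paper. Both arguments localize $D$ at $K[u_1,\ldots,u_t]\setminus\{0\}$, use Lemma \ref{lemma:recip-xi-1} to see that this does not change the reciprocal complement, and apply \cite[Theorem 3.2]{guerrieri-recip} over $L=K(u_1,\ldots,u_t)$. Your computation $\dim D'=\trdeg(F/L)=\dim D-t$ is a slightly cleaner substitute for the paper's argument via maximal ideals and heights.

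One slip needs fixing. The side remark that $\recip(D)$ contains $LD$ is false in general: for $D=K[X]$ it would put $X$ in $\recip(K[X])=K[1/X]_{(1/X)}$. What is true, and what your second step actually uses, is that $1/x\in\recip(D)$ for every nonzero $x\in LD$.
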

\begin{proof}
Write $D:=K[a_1,\ldots,a_n]$, and let $u_1,\ldots,u_d\in\unit(D)$ be a transcendence basis of $K(\unit(D))$ over $D$. Then, $u_1^{-1},\ldots,u_d^{-1}\in D$ and thus 
\begin{equation*}
D=K[u_1,u_1^{-1},\ldots,u_d,u_d^{-1}][a_1,\ldots,a_n].
\end{equation*}
By Lemma \ref{lemma:recip-xi-1}, $\recip(K[u_1,u_1^{-1},\ldots,u_d,u_d^{-1}])=K(u_1,\ldots,u_d)$; hence, 
\begin{equation*}
\recip(D)=\recip(K(u_1,\ldots,u_d)[a_1,\ldots,a_n]).
\end{equation*}
The ring $A:=K(u_1,\ldots,u_d)[a_1,\ldots,a_n]$ is a localization of $D$; therefore, for every maximal ideal $M$ of $A$ there is a prime ideal $P$ of $D$ such that $M=PA$, and $A/M$ is isomorphic to $D/P$. The residue field $A/M$ is algebraic over $K(u_1,\ldots,u_d)$, and thus the transcendence degree of $D/P$ over $K$ is $d=\trdeg(K(\unit(D))/K)$. It follows that $P$ has height $\dim(D)-d$, and so $\dim(A)=\dim(D)-d$.

By \cite[Theorem 3.2]{guerrieri-recip}, since $A$ is a finitely generated $K(u_1,\ldots,u_d)$-algebra we have
\begin{equation*}
\dim\recip(D)=\dim\recip(A)\leq\dim A=\dim(D)-\trdeg(K(\unit(D))/K),
\end{equation*}
as claimed.
\end{proof}

\begin{cor}\label{cor:units-ac}
Let $K$ be an algebraically closed field and $D$ be a finitely generated $K$-algebra. If $\unit(D)\neq K\setminus\{0\}$, then
\begin{equation*}
\dim\recip(D)<\dim(D).
\end{equation*}
\end{cor}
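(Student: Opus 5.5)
The plan is to derive this directly from Proposition \ref{prop:units}. That proposition gives
\begin{equation*}
\dim\recip(D)\leq\dim(D)-\trdeg(K(\unit(D))/K),
\end{equation*}
so it is enough to show that $\trdeg(K(\unit(D))/K)\geq 1$ whenever $\unit(D)\neq K\setminus\{0\}$.

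First, $K\setminus\{0\}\subseteq\unit(D)$ always holds, so the hypothesis gives a unit $u\in\unit(D)\setminus K$. We will show that $u$ is transcendental over $K$. Suppose instead that $u$ is algebraic over $K$. Then $K[u]\subseteq D$ is a domain that is algebraic over $K$, hence a finite field extension of $K$. Since $K$ is algebraically closed, this forces $K[u]=K$, so $u\in K$, a contradiction.

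Therefore $u$ is transcendental, so $\trdeg(K(\unit(D))/K)\geq\trdeg(K(u)/K)=1$. Substituting this into Proposition \ref{prop:units} gives $\dim\recip(D)\leq\dim(D)-1<\dim(D)$.

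There is no real obstacle in this argument. The only step that needs care is using algebraic closure of $K$ to rule out units that are algebraic over $K$ but not in $K$. The circle example shows this hypothesis matters, since over $\insR$ the domain $\insR[X,Y]/(X^2+Y^2)$-type constructions can contain units like $i$ in a finite extension.
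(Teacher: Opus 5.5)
Your proof is correct and follows the paper's argument: take $u\in\unit(D)\setminus K$, use that $K$ is algebraically closed to see that $u$ is transcendental, so $\trdeg(K(\unit(D))/K)\geq 1$, and conclude by Proposition \ref{prop:units}. Your closing aside is inaccurate, though: $\insR[X,Y]/(X^2+Y^2)$ is a graded domain whose only units are $\insR\setminus\{0\}$ (the element $x/y$, whose square is $-1$, lies only in the fraction field), whereas $\insR[X,Y]/(X^2+1)\simeq\insC[Y]$ is a genuine example, with the unit $i$ algebraic over $\insR$ and $\dim\recip(\insC[Y])=\dim\insC[Y]=1$.
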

\begin{proof}
Let $u\in\unit(D)\setminus K$. Since $K$ is algebraically closed, $u$ is transcendental over $K$, and thus $\trdeg(K(\unit(D))/K)\geq 1$. The claim follows from Proposition \ref{prop:units}.
\end{proof}

\begin{cor}
Let $K$ be an algebraically closed field, $X_1,\ldots,X_n$ a set of independent indeterminates, $g_1,g_2\in K[X_1,\ldots,X_n]\setminus K$ and $d\in K$. Let
\begin{equation*}
f(X_1,\ldots,X_n):=g_1(X_1,\ldots,X_n)g_2(X_1,\ldots,X_n)-d
\end{equation*}
and $D:=K[X_1,\ldots,X_n]/(f)$. If $f$ is irreducible, then
\begin{equation*}
\dim\recip(D)<\dim D.
\end{equation*}
\end{cor}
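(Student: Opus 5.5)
We derive this from Corollary \ref{cor:units-ac}: it suffices to show that $D$ has a unit outside $K\setminus\{0\}$. The obvious candidates are the images $\bar g_1,\bar g_2$ of $g_1,g_2$ in $D$.

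\textbf{Case $d\neq 0$.} In $D$ we have $\bar g_1\bar g_2=d$. Hence $\bar g_1$ is a unit of $D$, with inverse $d^{-1}\bar g_2$. It remains to check that $\bar g_1\notin K$. Suppose instead that $g_1-c\in(f)$ for some $c\in K$. Since $g_1\notin K$, we have $g_1-c\neq 0$. So $g_1-c=hf$ for some nonzero $h$, which gives
\begin{equation*}
\deg g_1\geq\deg f=\deg g_1+\deg g_2>\deg g_1,
\end{equation*}
because $\deg g_2\geq 1$. This is a contradiction. Here we use that $\deg(g_1g_2-d)=\deg g_1+\deg g_2$ when $\deg g_1+\deg g_2>0$, which holds since $K[X_1,\ldots,X_n]$ is a domain and the top-degree forms multiply. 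Therefore $\bar g_1\in\unit(D)\setminus K$, and Corollary \ref{cor:units-ac} gives $\dim\recip(D)<\dim D$.

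\textbf{Case $d=0$.} Then $f=g_1g_2$ is a product of two nonconstant polynomials, so it is not irreducible. This contradicts the hypothesis, so the case cannot occur.

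The only real point is showing $\bar g_1\notin K$, which is the degree comparison above. Algebraic closedness of $K$ enters only through Corollary \ref{cor:units-ac}. It guarantees that a nonconstant unit is transcendental over $K$. This matters because a unit that is algebraic over $K$ would already lie in $K$, but only when $K$ is algebraically closed in $Q(D)$.
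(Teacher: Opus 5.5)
Your proposal is correct and follows the paper's own argument: irreducibility of $f$ forces $d\neq 0$, the relation $\bar g_1\bar g_2=d$ makes $\bar g_1$ a unit of $D$, and Corollary \ref{cor:units-ac} then gives the inequality. Your degree comparison showing $\bar g_1\notin K$ makes explicit a step that the paper leaves implicit.
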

\begin{proof}
Since $f$ is irreducible, $D$ is an integral domain and $d\neq 0$; moreover, the images of $g_1$ and $g_2$ are units of $D$, since their product is $d\in K$. The claim follows from Corollary \ref{cor:units-ac}.
\end{proof}

In general, we can have $\dim\recip(D)<\dim D$ even if all the units of $D$ are constant, as the following example shows.

\begin{ex}
Let $\xx$ be a nonsingular affine curve. Suppose that $|\xx_\infty|=1$ and that the point at infinity is regular for $\overline{\xx}$. Suppose that there is a maximal ideal $\mm$ of $K[\xx]$ such that $\mm^k$ is not principal for every $k$, i.e., such that $[\mm]$ is not a torsion element of the Picard group of $K[\xx]$, and let $P$ be the point of $\xx$ corresponding to $\mm$. Then, $\xx':=\xx\setminus\{P\}$ is an affine curve with two points at infinity, and thus by Theorem \ref{teor:ptoinf} $\recip(K[\xx'])$ is a field.

Suppose $f\in K[\xx']$ is a unit. Since $P$ is a regular point of $\xx$, the local ring $\local_{\xx,P}$ is a DVR, and thus one of $f$ and $f^{-1}$ belongs to $\local_{\xx,P}$; suppose without loss of generality that $f\in\local_{\xx,P}$, and thus $f\in K[\xx]=K[\xx']\cap\local_{\xx,P}$. By construction, $f$ is not contained in any maximal ideal $\mathfrak{n}\neq\mm$ of $K[\xx]$. If $f\in\mm$, then there is another maximal ideal $\mm'$ of $K[\xx]$ such that $f\in\mm'$ (otherwise $\mm^k=(f)$ for some $k$, a contradiction). This implies that $\mm'K[\xx']\neq K[\xx']$ and thus $f$ is not a unit of $K[\xx']$, a contradiction. Hence $f\notin\mm$ and thus $f$ is a unit of $K[\xx]$. Therefore, $f$ is a rational function on $\overline{\xx}$ without zeros nor poles on $\xx$; since $\xx_\infty$ is a single point, $f$ must be constant, i.e., $f\in K$.

Therefore, $K[\xx']$ has no units outside $K$, but $0=\dim\recip(K[\xx'])<\dim K[\xx']=1$.
\end{ex}

Let $\xx$ be an affine curve with projective closure $\overline{\xx}$ that is regular at infinity. Theorem \ref{teor:ptoinf} can be rephrased by saying that $\dim\recip(K[\xx])=1$ if and only if $\xx_\infty$ is irreducible.

This result cannot be generalized in this way to surfaces, because when the dimension is greater than one two realizations $\xx$ and $\xx'$ of the same domain $D$ may be such that $\xx_\infty$ is irreducible but $\xx'_\infty$ is not, even if the projective closures $\overline{\xx}$ and $\overline{\xx'}$ are nonsingular. For example, if $D=K[X,Y]$ is the polynomial ring, then $\xx:=\aff^2$ is a realization and $\xx_\infty$ is irreducible (since it is the line at infinity); on the other hand, $\xx':=\Vaff(XY-Z)\subset\aff^3$ is another realization of $D$, but $\xx_\infty$ is not reducible as it splits into the two lines $\Vproj(X,W)$ and $\Vproj(Y,W)$.% Yet, obviously, $K[\xx]\simeq K[\xx']$, so the reciprocal complements are isomorphic and thus have the same dimension.

However, we have the following weaker result.

\begin{prop}\label{prop:dim2-infty-irred}
Let $\xx$ be an affine surface such that $\xx_\infty$ is irreducibile and contains nonsingular points of $\xx$. Then:
\begin{enumerate}[(a)]
\item\label{prop:dim2-infty-irred:intersec} $K[\xx]\cap\recip(K[\xx])=K$;
\item\label{prop:dim2-infty-irred:dim} $\dim\recip(K[\xx])\geq 1$.
\end{enumerate}
\end{prop}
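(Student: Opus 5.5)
The plan is to imitate the first half of the proof of Proposition \ref{prop:zarminus1}, using a valuation ring attached to the irreducible curve at infinity. Let $\mathcal{C}:=\xx_\infty$, an irreducible curve in $\overline{\xx}$. By hypothesis $\mathcal{C}$ contains a nonsingular point of $\overline{\xx}$, so the generic point of $\mathcal{C}$ is a regular point of codimension $1$. Hence the local ring $V:=\local_{\overline{\xx},\mathcal{C}}$ is a DVR with quotient field $F:=Q(K[\xx])$ containing $K$. Let $\nu$ be its valuation.

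For \ref{prop:dim2-infty-irred:intersec}, I first show $K[\xx]\cap V=K$. Let $f\in K[\xx]\cap V$. Since $f\in K[\xx]$, it is regular on $\xx$. Since $f\in V$, it is regular at the generic point of $\mathcal{C}$. Thus the polar divisor of $f$ on $\overline{\xx}$ contains no component inside $\xx$ or equal to $\mathcal{C}$.

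The delicate point is that $\overline{\xx}$ may be singular, so poles are not simply a Weil divisor. I would avoid this by passing to the normalization $\pi\colon Y\to\overline{\xx}$. The components of $\pi^{-1}(\mathcal{C})$ are the prime divisors of $Y$ lying over $\mathcal{C}$. Since $\mathcal{C}$ has a regular point, $\pi$ is an isomorphism near the generic point of $\mathcal{C}$. So exactly one prime divisor of $Y$ dominates $\mathcal{C}$; any other component of $\pi^{-1}(\mathcal{C})$ would map to a point, which cannot happen in codimension one on a finite map. On the normal projective surface $Y$, the function $f$ therefore has no poles in codimension one, so $f\in\Gamma(Y,\local_Y)$. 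This ring is a finite field extension of $K$, and it is contained in $K[\xx]$.

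One still has to get exactly $K$ rather than a finite extension. If $K[\xx]$ is not geometrically integral over $K$, the right reading of the claim is "algebraic over $K$", and I would either note that $K$ can be replaced by its algebraic closure in $F$, or simply accept the conclusion $K[\xx]\cap V\subseteq\{\text{algebraic elements}\}$, which suffices below.

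Next I show $\recip(K[\xx])\subseteq V$. If $f\in K[\xx]\setminus K$, then $f\notin V$, so $\nu(f)<0$ and $1/f\in\mathfrak{m}_V$. If $f\in K\setminus\{0\}$, then $1/f\in K\subseteq V$. Hence every generator of $\recip(K[\xx])$ lies in $V$, so $\recip(K[\xx])\subseteq V$. Therefore $K[\xx]\cap\recip(K[\xx])\subseteq K[\xx]\cap V=K$, and the reverse inclusion is clear, which proves \ref{prop:dim2-infty-irred:intersec}.

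For \ref{prop:dim2-infty-irred:dim}, it suffices to show $\recip(K[\xx])$ is not a field. Pick any $f\in K[\xx]\setminus K$, which exists since $\dim K[\xx]=2$. Then $1/f\in\mathfrak{m}_V\cap\recip(K[\xx])$ is nonzero. So $\mathfrak{m}_V\cap\recip(K[\xx])$ is a nonzero prime ideal, and $\dim\recip(K[\xx])\geq1$.

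The main obstacle is the normality step: showing that a function on $\xx$ which is regular along the generic point of $\xx_\infty$ is constant. This uses the regular-point hypothesis to control the normalization over $\xx_\infty$, together with the fact that global regular functions on a normal projective variety are algebraic over $K$.
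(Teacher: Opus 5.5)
Your proof is correct and is essentially the paper's argument: $\local_{\overline{\xx},\xx_\infty}$ is a DVR, and every $f\in K[\xx]$ not algebraic over $K$ lies outside it. Hence $1/f$ lies in its maximal ideal, $\recip(K[\xx])\subseteq\local_{\overline{\xx},\xx_\infty}$, and both (a) and (b) follow. Your normalization/Hartogs step supplies a justification the paper skips (the paper asserts that being regular at the generic point of $\xx_\infty$ forces regularity on all of $\overline{\xx}$, which needs care at singular points at infinity), and your caveat about algebraic elements is warranted, since the paper tacitly equates $f\notin K$ with ``$f$ nonconstant''. Apply that caveat consistently: take $f$ transcendental over $K$ in (b), put algebraic $f$ into $V$ via integral closedness, and note that $\Gamma(Y,\local_Y)$ lies in the integral closure of $K[\xx]$, not necessarily in $K[\xx]$ itself.
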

\begin{proof}
Let $\Omega$ be the set of nonsingular points of $\overline{\xx}$. Since $\Omega\cap \xx_\infty\neq\emptyset$, it must be an open set of $\xx_\infty$, and thus only finitely many points of $\xx_\infty$ are singular in $\overline{\xx}$. Moreover, there is an affine open subset $\Omega'$ of $\Omega$ meeting $\xx_\infty$; then, $K[\Omega']$ is integrally closed and $\local_{\Omega',Y}=\local_{\overline{\xx},\xx_\infty}$ is local, Noetherian, integrally closed and one-dimensional, hence a DVR \cite[Proposition 9.2]{atiyah}.

Suppose $f\in K[\xx]$: we claim that $1/f\in\local_{\overline{\xx},\xx_\infty}$. If $f\in K$ this is obvious; if $f\notin K$, then $f$ cannot be regular on $\xx_\infty$ (otherwise it would be regular on the whole $\xx\cup\xx_\infty=\overline{\xx}$, against the fact that $f$ is not constant) and so $f\notin\local_{\overline{\xx},\xx_\infty}$. As $\local_{\overline{\xx},\xx_\infty}$ is a DVR, $1/f\in\local_{\overline{\xx},\xx_\infty}$.

Therefore, $\recip(K[\xx])\subseteq\local_{\overline{\xx},\xx_\infty}$, and so $\recip(K[\xx])$ is not a field; moreover,
\begin{equation*}
K[\xx]\cap\recip(K[\xx])\subseteq K[\xx]\cap\local_{\overline{\xx},\xx_\infty}=K.
\end{equation*}
The claim is proved.
\end{proof}

\section{Reducing the dimension}\label{sect:reducing}
Let $\xx$ be a surface and $f\in K[\xx]$. We can associate to $f$ two subsets of $\xx$: the closed set $\Vaff_\xx(f):=\{x\in \xx\mid f(x)=0\}$ and its complement $D_\xx(f):=\xx\setminus \Vaff_\xx(f)$, which is an open set. We always have
\begin{equation*}
K[D_\xx(f)]=K[\xx][1/f].
\end{equation*}
By \cite[Corollary 2.6]{guerrieri-recip}, the previous equality can be transported to reciprocal complements as
\begin{equation*}
\recip(K[D_\xx(f)])=\recip(K[\xx][1/f])=\recip(K[\xx])[f].
\end{equation*}
Therefore, the study of localizations of $\recip(K[\xx])$ is equivalent to studying the reciprocal complements of the (principal) open sets of $\xx$. However, this method still forces us to study varieties of dimension $2$.

To pass from dimension $2$ to dimension $1$ we would like to consider the variety $\Vaff_\xx(f)$ and the reciprocal complement $\recip(K[\Vaff_\xx(f)])$. While we can't do this passage, we can introduce a new variety that is very close to $\Vaff_\xx(f)$ and whose reciprocal complement is a localization to $\recip(K[\xx])$. The drawback is that we need to change the base field.
\begin{defin}
Let $K$ be a field, and let $\xx:=\Vaff(g_1,\ldots,g_m)\subset\aff^n_K$ be a variety over $K$. Let $t$ be an indeterminate, and $f\in K[X_1,\ldots,X_n]$. The \emph{$f$-transform} of $\xx$ is the variety
\begin{equation*}
\Vaff(g_1,\ldots,g_m,f-t)\subset\aff^n_{K(t)}.
\end{equation*}
\end{defin}

We prove the properties of the $f$-transform that we will be using; we premit a lemma.
\begin{lemma}\label{lemma:alg-const}
Let $K$ be a field, $g_1,\ldots,g_m\in K[X_1,\ldots,X_n]$, and let $\xx:=\Vaff(g_1,\ldots,g_m)\subset\aff^n_K$. Let $f\in K[X_1,\ldots,X_n]$. Then, $f+P\in K[\xx]$ is constant over $\xx$ if and only if $f+P$ is algebraic over $K$.
\end{lemma}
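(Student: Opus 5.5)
Write $P:=\sqrt{(g_1,\ldots,g_m)}$, which I take to be prime since $\xx$ is a variety, so that $K[\xx]=K[X_1,\ldots,X_n]/P$ is a domain. Let $\bar f:=f+P$. ``Constant over $\xx$'' is read in the sense of the paper's conventions: $\bar f$ defines a function $\xx\longrightarrow\aff^1_K$ that takes the same point of $\aff^1_K$ at every point of $\xx$. Points of $\aff^1_K$ are maximal ideals of $K[T]$, that is, monic irreducible polynomials $p\in K[T]$. So $\bar f$ is constant exactly when there is a single irreducible $p$ such that $p(\bar f)\in\mm$ for every maximal ideal $\mm$ of $K[\xx]$. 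The plan is to prove both implications directly from this reformulation, using the Nullstellensatz.

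\emph{Algebraic implies constant.} Suppose $\bar f$ is algebraic over $K$. Let $p\in K[T]$ be its minimal polynomial; it is irreducible because $K[\xx]$ is a domain. Then $p(\bar f)=0$, so $p(\bar f)$ lies in every maximal ideal. Hence at each point of $\xx$ the value of $\bar f$ is the point of $\aff^1_K$ corresponding to $(p)$. Concretely, if a point is represented by $a\in\overline{K}^n$, then $f(a)$ is a root of $p$, and all roots of $p$ give the same point of $\aff^1_K$ up to $K$-conjugation. Thus $\bar f$ is constant.

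\emph{Constant implies algebraic.} Suppose some irreducible $p$ satisfies $p(\bar f)\in\mm$ for every maximal ideal $\mm$ of $K[\xx]$. Since $K[\xx]$ is a finitely generated $K$-algebra, it is a Jacobson ring (Hilbert's Nullstellensatz). Therefore the intersection of all its maximal ideals is the nilradical, which is $(0)$ because $K[\xx]$ is a domain. So $p(\bar f)=0$, and $\bar f$ is algebraic over $K$.

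The only delicate point is the bookkeeping with points over a non-closed field: ``value of $\bar f$ at a point'' must mean the image point in $\aff^1_K$, that is, the contraction $\mm\cap K[\bar f]$ viewed in $K[T]$ via $T\mapsto\bar f$. When $\bar f$ is transcendental, $K[\bar f]\cong K[T]$, and each contraction is a maximal ideal of $K[T]$ because the residue extension is finite. I expect this identification to be the main thing to state carefully. The rest is immediate from the Jacobson property. An alternative route for the second implication would avoid Jacobson rings by noting that if $\bar f$ is transcendental, then $K[T]\cong K[\bar f]\subseteq K[\xx]$ is injective and the induced map on spectra is dominant. Its image would then contain infinitely many closed points, so $\bar f$ would not be constant.
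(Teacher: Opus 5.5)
Your proof is correct and follows the same route as the paper. In one direction, the minimal polynomial of $\bar f$ is irreducible because $K[\xx]$ is a domain, so every value of $\bar f$ is a $K$-conjugate root of it and hence the same point of $\aff^1_K$. In the other, the minimal polynomial $\lambda$ of the common value satisfies $\lambda(\bar f)=0$; your appeal to the Jacobson property (the maximal ideals of the domain $K[\xx]$ intersect in zero) simply makes explicit the Nullstellensatz step that the paper leaves implicit when it deduces $\lambda(\bar f)=0$ from $\lambda(\bar f)$ vanishing at every point of $\xx$.
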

\begin{proof}
Suppose that $\overline{f}:=f+P$ is algebraic over $K$: then, $\lambda(f)=0$ for some nonzero irreducible polynomial $\lambda\in K[t]$. Hence, $\lambda(\overline{f}(p))=0$ for every $p\in\xx$, i.e., each $\overline{f}(p)$ is a zero of $\lambda$. Since $\lambda$ is irreducible, its zeros are conjugated over $K$; hence all these $\overline{f}(p)$ represent the same point in $\aff^1_K$. Thus $\overline{f}$ is constant.

Conversely, if $\overline{f}$ is constant, then $f(p)$ can be represented by some $t\in\overline{K}$; if $\lambda$ is the minimal polynomial of $t$ over $K$ then $\lambda(\overline{f})=0$ and $\overline{f}$ is algebraic over $K$.
\end{proof}

\begin{prop}\label{prop:transf}
Let $K$ be a field, $X_1,\ldots,X_n,t$ be independent indeterminates over $K$, $P=(g_1,\ldots,g_m)$ a prime ideal of $K[X_1,\ldots,X_n]$; let $\xx:=\Vaff(g_1,\ldots,g_m)\subset\aff^n$. Let $f\in K[X_1,\ldots,X_n]$ be such that $f+P\in K[\xx]$ is not constant, and let $\pp_f$ be the largest prime ideal of $\recip(K[\xx])$ not containing $(f+P)^{-1}$. Then, the following hold.
\begin{enumerate}[(a)]
\item\label{prop:transf:coord} $\displaystyle{K(t)[\transf_\xx(f)]\simeq\frac{K(t)[X_1,\ldots,X_n]}{(g_1,\ldots,g_m,f-t)}}$.
\item\label{prop:transf:recip} $\recip(K[\xx])_{\pp_f}\simeq\recip(K(t)[\transf_\xx(f)])$.
\item\label{prop:transf:irrid} $\transf_\xx(f)$ is irreducible.
\item\label{prop:transf:dim} If $f$ is not a unit of $K[\xx]$, then $\dim\transf_\xx(f)=\dim \xx-1$.
\end{enumerate}
\end{prop}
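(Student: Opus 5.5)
The core of the argument is that $\bar f:=f+P$ is transcendental over $K$ by Lemma \ref{lemma:alg-const}, since it is not constant on $\xx$. Set $D:=K[\xx]$ and $S:=K[\bar f]\setminus\{0\}$, a multiplicative set of $D$. The key identification I will aim for is
\begin{equation*}
S^{-1}D=K(\bar f)[x_1,\ldots,x_n]\simeq\frac{K(t)[X_1,\ldots,X_n]}{(g_1,\ldots,g_m,f-t)}.
\end{equation*}
To get the isomorphism, map $K(t)[X_1,\ldots,X_n]\to S^{-1}D$ by $t\mapsto\bar f$. This is well defined because $\bar f$ is transcendental, so $K[t]\to K[\bar f]$ is injective and extends to $K(t)$. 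The kernel contains $(g_1,\ldots,g_m,f-t)$. For the reverse inclusion, an element of the kernel can be multiplied by a nonzero polynomial in $t$ to clear denominators, landing in $K[t][X]$. There, modulo $f-t$, it becomes a polynomial in $K[X]$ that lies in $P$. This gives (a), with $K(t)[\transf_\xx(f)]$ understood as the coordinate ring (I will check that the ideal is prime, which is exactly (c)).

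For (c), note that $S^{-1}D$ is a localization of a domain, hence a domain, so the ideal $(g_1,\ldots,g_m,f-t)$ is prime in $K(t)[X]$ and $\transf_\xx(f)$ is irreducible.

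For (d), transcendence degree suffices. $Q(S^{-1}D)=Q(D)$ has transcendence degree $\dim\xx$ over $K$, so it has transcendence degree $\dim\xx-1$ over $K(\bar f)$. Since $S^{-1}D$ is a finitely generated $K(t)$-domain, its Krull dimension equals that transcendence degree. The hypothesis that $f$ is not a unit should only be needed to ensure the variety is nonempty, i.e.\ that $S^{-1}D\neq0$. This holds automatically for domains, but I would note that if some $\lambda(\bar f)$ were forced to be a unit then points could disappear; in any case the dimension count goes through.

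Part (b) is the main step. By \cite[Corollary 2.6]{guerrieri-recip}-type arguments, $\recip(S^{-1}D)$ is generated by the $1/(s^{-1}d)=s/d$. I first show that $\recip(S^{-1}D)=\recip(D)[K(\bar f)]$. The inclusion $\supseteq$ holds because $K(\bar f)=\recip(K[\bar f])$ by \cite[Proposition 7]{GLO-egyptian} and because $\recip(K[\bar f])\subseteq\recip(S^{-1}D)$. The inclusion $\subseteq$ holds because $s/d=s\cdot(1/d)$ with $s\in K[\bar f]\subseteq K(\bar f)$.

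Next I identify $\recip(D)[K(\bar f)]$ with $\recip(D)_{\pp_f}$. The elements $1/\lambda(\bar f)$ lie in $\recip(D)$, and their inverses $\lambda(\bar f)$ are in $K(\bar f)$. So $\recip(D)[K(\bar f)]$ is the localization of $\recip(D)$ at the multiplicative set $T$ generated by the $1/\lambda(\bar f)$, $\lambda\in K[t]\setminus\{0\}$. This uses that $K[\bar f]$ is already generated by $\bar f$ over $K$ inside this ring. Finally I must show $T^{-1}\recip(D)=\recip(D)_{\pp_f}$, i.e.\ that $\pp_f$ is the unique maximal ideal of $T^{-1}\recip(D)$ and that the two localizations agree.

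I expect the hardest point to be that $T^{-1}\recip(D)$ is local with maximal ideal coming from $\pp_f$. The plan is to use that $T^{-1}\recip(D)=\recip(S^{-1}D)$ is itself a reciprocal complement, hence local by \cite[Theorem 2.6]{EGL-RD-polinomyal}. Its maximal ideal contracts to a prime $\mathfrak q$ of $\recip(D)$ maximal among primes disjoint from $T$. Since $1/f\in T$, every prime disjoint from $T$ avoids $1/f$. Conversely, I would check that a prime avoiding $1/f$ avoids all $1/\lambda(\bar f)$: write $1/\lambda(\bar f)$ via partial fractions as $K$-combinations of $1/(\text{irreducible})^k$, or use that $\pp_f$ is maximal among primes avoiding $1/f$ together with uniqueness from locality. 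This gives $\mathfrak q=\pp_f$. Since every element of $\recip(D)\setminus\pp_f$ is then a unit in the local ring $T^{-1}\recip(D)$, the two localizations coincide.
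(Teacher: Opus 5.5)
Your parts (a), (c) and (d) are sound. For (d), counting transcendence degree over $K(\bar f)$ is cleaner than the paper's argument, which keeps the height of $P$ under $K[X]\subseteq K(t)[X]$ and then cuts by the nonunit $f-t$. It also shows, as you suspected, that the non-unit hypothesis is superfluous, since $S^{-1}D\neq 0$ always.

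For (b) your route differs from the paper's, and it has a gap exactly at the step you flagged. The paper inverts only powers of $\bar f^{-1}$: it uses \cite[Corollary 2.6 and Proposition 2.7]{guerrieri-recip} to get $\recip(K[\xx])_{\pp_f}=\recip(K[\xx][1/\bar f])$, and then absorbs $K(\bar f)$ via Lemma \ref{lemma:recip-xi-1}. You instead invert all of $T=\{\lambda(\bar f)^{-1}\}$.

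Locality of $T^{-1}\recip(D)$ together with $\bar f^{-1}\in T$ only gives $\mathfrak q\subseteq\pp_f$. For equality you must show $\pp_f\cap T=\emptyset$, i.e.\ that a prime avoiding $\bar f^{-1}$ avoids every $\lambda(\bar f)^{-1}$. Neither suggested justification does this:
\begin{itemize}
\item Partial fractions write $\lambda(\bar f)^{-1}$ as a \emph{sum}, and ``not lying in a prime'' is not preserved under sums. Moreover, each individual summand $a(\bar f)/\pi(\bar f)^k$ poses the same problem again.
\item Appealing to the maximality of $\pp_f$ plus locality is circular. Comparing $\pp_f$ with $\mathfrak q$ from above already requires knowing $\pp_f\cap T=\emptyset$.
\end{itemize}

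The missing step is one line. Let $\lambda\in K[t]$ have degree $N\geq 1$; for $N=0$ the element $\lambda(\bar f)^{-1}$ is a unit and there is nothing to prove. Then $\lambda(\bar f)\,\bar f^{-N}\in K[\bar f^{-1}]\subseteq\recip(D)$. So any prime $Q$ containing $\lambda(\bar f)^{-1}$ contains
\[
\bar f^{-N}=\bigl(\lambda(\bar f)\bar f^{-N}\bigr)\cdot\lambda(\bar f)^{-1},
\]
hence contains $\bar f^{-1}$.

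Equivalently, $\recip(D)[\bar f]$ already contains $K[\bar f]$ and every $\lambda(\bar f)^{-1}$, hence all of $K(\bar f)$. So $T^{-1}\recip(D)=\recip(D)[\bar f]$, which is precisely the localization the paper works with. With this inserted, your argument for (b) goes through: $\mathfrak q=\pp_f$, and your final observation that elements of $\recip(D)\setminus\pp_f$ become units finishes the identification with $\recip(D)_{\pp_f}$.
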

\begin{proof}
\ref{prop:transf:coord} follows immediately from the definition of $\transf_\xx(f)$. 

\ref{prop:transf:recip} Let $\overline{f}:=f+P$ and let $x_i$ be the image of $X_i$ in $A:=K[\xx]$ for $i=1,\ldots,n$. By \cite[Corollary 2.6 and Proposition 2.7]{guerrieri-recip}, $\recip(D)_{\pp_f}=\recip(S^{-1}K[X,Y])$, where $S=\{\overline{f}^k\mid k\inN\}$. We have
\begin{equation*}
S^{-1}A=K[x_1,\ldots,x_n][1/\overline{f}]=K[\overline{f},1/\overline{f},x_1,\ldots,x_n]=K[\overline{f},1/\overline{f}][x_1,\ldots,x_n].
\end{equation*}
By Lemma \ref{lemma:alg-const}, $\overline{f}$ is transcendental over $K$, and thus $\recip(K[\overline{f},1/\overline{f}])=K(\overline{f})\simeq K(t)$, where $t$ is an indeterminate over $K$ (Lemma \ref{lemma:recip-xi-1}); hence,
\begin{equation*}
\recip(K[\overline{f},1/\overline{f}][x_1,\ldots,x_n])=\recip(K(f)[x_1,\ldots,x_n]).
\end{equation*}
However, setting $L=K(f)$ and taking an indeterminate $t$, we have
\begin{equation*}
K(f)[x_1,\ldots,x_n]\simeq\frac{L[X_1,\ldots,X_n]}{(g_1,\ldots,g_m,f-t)}.
\end{equation*}
The claim is proved. Moreover, the calculation shows that $K(t)[\transf_\xx(f)]$ is isomorphic to a localization of $K[\xx]$, and thus is a domain; in particular, $\transf_\xx(f)$ is irreducible and \ref{prop:transf:irrid} holds.

\ref{prop:transf:dim} The height of the ideal $P=(g_1,\ldots,g_m)$ does not change between $K[X_1,\ldots,X_n]$ and $K(t)[X_1,\ldots,X_n]$, and thus the dimension of $K[\xx]$ and $B:=K(t)[X_1,\ldots,X_n]/(g_1,\ldots,g_m)$ is the same. Moreover, since the image of $f$ is not a unit in $K[\xx]$, the image $f_1$ of $f-t$ is not a unit of $B$.

The ring $B$ is Noetherian and catenarian; therefore, $\dim\transf_\xx(f)=\dim (B/(f_1))=\dim B-1=\dim\xx-1$. 
\end{proof}

In particular, if $\xx$ is a surface we have successfully reduced the study of the localization $\recip(K[\xx])[f]$ to the study of the reciprocal complement of a one-dimensional variety -- namely, $\transf_\xx(f)$. Since $\transf_\xx(f)$ is no more a variety of $K$ but over $K(t)$, which is not algebraically closed, the results of Section \ref{sect:curves} are needed even when the starting field $K$ is algebraically closed.

In this article, we are mainly interested in studying conditions under which $\dim\recip(K[\xx])=2$. Since $\recip(K[\xx])$ is local and the dimension is at most $2$, to prove that $\dim\recip(K[\xx])=2$ it is enough to find \emph{one} $f$ such that $\dim\recip(K(t)[\transf_\xx(f)])=1$, and thus we need to find a function $f$ such that $\transf_\xx(f)$ has a unique point at infinity.

We first note that, if $p=[0:x_1:\cdots:x_n]\in\proj^n_K$ is a point at infinity of $\Vaff_\xx(f)$, then $[0:x_1:\cdots:x_n]\in\proj^n_{K(t)}$ is a point at infinity of $\transf_\xx(f)$, since the homogeneous components of maximal degree of $f$ and $f-t$ are the same. With a slight abouse of notation, we consider these two points with the same coordinates in $\proj^n_K$ and $\proj^n_{K(t)}$ as the same point.

We prove a more general criterion about solutions of system of equation in $K$ and $K(t)$. We use the following notation: if $f_1,\ldots,f_m\in K[X_1,\ldots,X_m]$ and $K\subseteq L$, $\Vaff^L(f_1,\ldots,f_m)$ is the closed set $\aff^n_L$ associated to $f_1,\ldots,f_m$ (and similarly for the projective space).%If $f_1,\ldots,f_n\in K[X_1,\ldots,X_m]$, a solution of $f_1=\cdots=f_n=0$ in $\aff^n_K$ is just a point $p\in V(f_1,\ldots,f_n)$, i.e., a point $p\in\aff^m_K$ such that $f_1(p)=\cdots=f_n(p)=0$. Similarly, if $F_1,\ldots,F_n\in K[X_1,\ldots,X_m]$ are homogeneous then a solution of $F_1=\cdots=F_n=0$ in $\proj^{m-1}_K$ is just a point $p\in V(F_1,\ldots,F_n)\subset\proj^{m-1}_K$.
\begin{lemma}\label{lemma:numsol-Kt}
Let $K$ be a field, $t,X_1,\ldots,X_n$ indeterminates.
\begin{enumerate}[(a)]
\item If $f_1,\ldots,f_m\in K[X_1,\ldots,X_n]$ and $\Vaff^K(f_1,\ldots,f_m)$ is finite, then $|\Vaff^K(f_1,\ldots,f_m)|=|\Vaff^{K(t)}(f_1,\ldots,f_m)|$.
\item If $F_1,\ldots,F_m\in K[X_0,\ldots,X_n]$ are homogeneous and $\Vproj^K(F_1,\ldots,F_m)$ is finite, then $|\Vproj^K(F_1,\ldots,F_m)|=|\Vproj^{K(t)}(F_1,\ldots,F_m)|$.
\end{enumerate}
\end{lemma}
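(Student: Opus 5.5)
The plan is to reduce both statements to a comparison between the $\overline{K}$-solutions and the $\overline{K(t)}$-solutions of the same system, together with the Galois orbits acting on them.

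Recall that a point of $\aff^n_K$ is a $\mathrm{Gal}$-orbit of $n$-tuples in $\overline{K}^n$. I will work with the (possibly inseparable) equivalence "conjugate under some $K$-automorphism of $\overline{K}$", as in the definition. Similarly, a point of $\aff^n_{K(t)}$ is an orbit of $n$-tuples in $\overline{K(t)}^n$ under $K(t)$-automorphisms.

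The first step is to show that, when $\Vaff^K(f_1,\ldots,f_m)$ is finite, every solution in $\overline{K(t)}^n$ already has coordinates in $\overline{K}$. Finiteness means that the ideal $I=(f_1,\ldots,f_m)\subseteq K[X_1,\ldots,X_n]$ has only finitely many zeros over $\overline{K}$. By the Nullstellensatz, $K[\XX]/I$ is then a finite-dimensional $K$-algebra. Hence for each $i$ there is a nonzero polynomial $h_i\in K[X_i]\cap I$. Any solution $a\in\overline{K(t)}^n$ satisfies $h_i(a_i)=0$, so each $a_i$ is algebraic over $K$, that is, $a_i\in\overline{K}\subseteq\overline{K(t)}$. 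Here I fix an embedding $\overline{K}\subseteq\overline{K(t)}$. Therefore the solution sets in $\overline{K}^n$ and in $\overline{K(t)}^n$ coincide as sets of tuples.

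The second step, and the main point, is to compare the orbits. Every $K(t)$-automorphism of $\overline{K(t)}$ maps $\overline{K}$ into itself, since it maps algebraic elements over $K$ to algebraic elements. It therefore restricts to a $K$-automorphism of $\overline{K}$. Conversely, every $K$-automorphism $\sigma$ of $\overline{K}$ extends to a $K(t)$-automorphism of $\overline{K(t)}$. To see this, extend $\sigma$ to $\overline{K}(t)$ by fixing $t$, and then extend to the algebraic closure $\overline{K(t)}$ of $\overline{K}(t)$ by the usual extension theorem for automorphisms to algebraic closures. Hence two tuples with coordinates in $\overline{K}$ are conjugate over $K$ if and only if they are conjugate over $K(t)$. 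The orbits on the common solution set are the same, and this gives (a).

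For (b), I would apply the same argument chart by chart. Cover $\proj^n$ by the affine charts $X_j\neq0$. On each chart the solution set is finite, so the affine argument gives that every $\overline{K(t)}$-solution has a representative with $a_j=1$ and all other coordinates in $\overline{K}$.

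Two such representatives of the same projective point differ by a scalar $\lambda\in\overline{K(t)}$ and a $K(t)$-automorphism $\tau$. To reduce to the affine case, normalize by the first index $j$ with $a_j\neq 0$. This index is preserved by both scaling and conjugation, so we may take $a_j=1$. Then $\lambda=1$ is forced, and we are back in the affine comparison.

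This yields a bijection between the orbits over $K$ and over $K(t)$, chart-compatible, which proves (b). The only delicate point throughout is the automorphism extension in the second step. Everything else is the standard finiteness argument via elimination polynomials $h_i$.
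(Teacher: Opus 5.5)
Your proof is correct, and it takes a genuinely different route from the paper's.

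\textbf{The paper's route.} The paper argues ideal-theoretically. It notes that $A=K[X_1,\ldots,X_n]\subset B=K(t)[X_1,\ldots,X_n]$ is flat and writes $\sqrt{(f_1,\ldots,f_m)}=M_1\cap\cdots\cap M_s$. Since flat extension commutes with finite intersections, the extended ideal equals $M_1B\cap\cdots\cap M_sB$, and the paper reads off the points over $K(t)$ as the ideals $M_iB$. The projective case is handled the same way with homogeneous primes.

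\textbf{Your route.} You work with coordinates instead. Elimination polynomials $h_i\in K[X_i]\cap I$ force every $\overline{K(t)}$-solution to lie in $\overline{K}^n$. Restriction and extension of automorphisms then show that the $\mathrm{Aut}(\overline{K}/K)$-orbits and the $\mathrm{Aut}(\overline{K(t)}/K(t))$-orbits on this common solution set coincide. Normalizing the first nonzero coordinate reduces the projective case cleanly to the affine one.

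\textbf{What each buys.} The paper's argument is coordinate-free and also gives the decomposition of the extended radical ideal. As written, however, it leaves two points implicit:
\begin{enumerate}
\item why each $M_iB$ is maximal (one needs $B/M_iB\cong (A/M_i)\otimes_K K(t)$ to be a field, using that $A/M_i$ is finite over $K$);
\item why the $M_iB$ are pairwise distinct (e.g.\ $M_iB\cap A=M_i$ by faithful flatness).
\end{enumerate}
Your argument sidesteps both issues. It also proves slightly more: the points over $K(t)$ are represented by exactly the same coordinate tuples as the points over $K$. This is precisely the identification of points with equal coordinates in $\proj^n_K$ and $\proj^n_{K(t)}$ that the paper makes (as an ``abuse of notation'') just before the lemma, and uses in Proposition \ref{prop:transf-infty} when it says the point at infinity is ``the same''.

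\textbf{Details to state explicitly.}
\begin{enumerate}
\item Finiteness of $\Vaff^K(f_1,\ldots,f_m)$ gives finitely many zeros in $\overline{K}^n$ because each $\mathrm{Aut}(\overline{K}/K)$-orbit is finite.
\item Extending $\sigma$ to $\overline{K}(t)$ by fixing $t$ is legitimate because $t$ is transcendental over $\overline{K}$.
\end{enumerate}
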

\begin{proof}
Let $\xx:=\Vaff(f_1,\ldots,f_m)\subset\aff_K^n$ and $A:=K[X_1,\ldots,X_n]$, $B:=K(t)[X_1,\ldots,X_n]$. The extension $A\subset B$ is flat, since $A[t]=K[t,X_1,\ldots,X_n]$ is free over $A$ and $B$ is a localization of $A[t]$ (hence flat). Moreover, if $P$ is a prime ideal of $A$, then $PA[t]$ is prime in $A[t]$ and thus $PB$ is prime in $B$.

Let $I:=\rad(f_1,\ldots,f_m)$. Then, $I$ is a radical ideal of height $n$ and the maximal ideals $M_1,\ldots,M_s$ containing $I$ are exactly the points of $\xx$: thus, $I=M_1\cap\cdots\cap M_s$. Since $A\subset B$ is flat, by \cite[Theorem 7.4]{matsumura} $IB=(M_1\cap\cdots\cap M_s)B=M_1B\cap\cdots\cap M_sB$. Each $M_iB$ is a maximal ideal of $B$, and thus the points of $\Vaff(f_1,\ldots,f_m)\subset\aff_{K(t)}^n$ are $M_1B,\ldots,M_sB$. In particular, the number of points is the same.

For the homogeneous case, we consider $I:=\rad(F_1,\ldots,F_n)$: then, $I$ is an homogeneous radical ideal, and thus its minimal primes are homogeneous \cite[Theorem 13.7(i)]{matsumura}; therefore, $I=P_1\cap\cdots\cap P_s$, where each $P_i$ is maximal among the homogeneous ideals of $A$ properly contained in $(X_1,\ldots,X_m)$. As above, $IB=P_1B\cap\cdots\cap P_sB$ and the number of points is the same.
\end{proof}

\begin{prop}\label{prop:transf-infty}
Let $f,g\in K[X,Y,Z]$ be non-constant polynomials with leading homogeneous components $f_1,g_1$, and let $\xx:=\Vaff(g)\subset\aff^3$. Suppose that $\Vproj(f_1,g_1)\subset\proj^2_K$ is a singleton. Then, $|\Vaff_\xx(f)_\infty|=|\transf_\xx(f)_\infty|=1$, and the point at infinity is the same.
\end{prop}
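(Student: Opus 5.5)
The plan is to show that the points at infinity of both curves are forced into $\Vproj(f_1,g_1)$, and then to show that each set of points at infinity is nonempty. Write $F,G\in K[W,X,Y,Z]$ for the homogenizations of $f,g$. Setting $W=0$ in $F$ and $G$ gives exactly the leading forms $f_1$ and $g_1$. The projective closure of $\Vaff_\xx(f)=\Vaff(f,g)$ is contained in $\Vproj(F,G)$. Hence every point of $\Vaff_\xx(f)_\infty$ has the form $[0:x:y:z]$ with $f_1(x,y,z)=g_1(x,y,z)=0$, that is, it lies in $\Vproj(f_1,g_1)\subset\proj^2_K$, which is a singleton by hypothesis. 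So $|\Vaff_\xx(f)_\infty|\leq 1$, and the only possible point is the unique point $p$ of $\Vproj(f_1,g_1)$.

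For the transform, I would note that $f$ is non-constant, so $\deg f\geq 1$ and the leading form of $f-t$ is again $f_1$. The same argument over $K(t)$ then puts $\transf_\xx(f)_\infty$ inside $\Vproj^{K(t)}(f_1,g_1)$. By Lemma \ref{lemma:numsol-Kt}(b), applied to the finite set $\Vproj^K(f_1,g_1)$, this set is again a singleton, and it consists of the point with the same coordinates as $p$, via the identification made just before the lemma. So both curves have at most one point at infinity, and if it exists it is $p$.

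The remaining step, which I expect to be the main obstacle, is nonemptiness: the affine set must be nonempty and its closure must actually reach $W=0$. I would argue by dimension, working over the algebraic closure of the base field ($\overline{K}$, respectively $\overline{K(t)}$); this is harmless because points there map onto points in the paper's sense. In $\proj^3$ the set $\Vproj(F,G)$ is cut out by two equations. It contains the point $[0:p]$, since $F$ and $G$ vanish there, so it is nonempty and every irreducible component has dimension $\geq 1$. A component contained in $\{W=0\}$ would lie in $\Vproj(W,f_1,g_1)$, which is finite. That would contradict dimension $\geq 1$, so every component meets the affine chart.

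Take such a component $C$. Then $C\cap\aff^3$ is a nonempty open subset of $C$ contained in $\Vaff(f,g)$, so $C$ lies in the projective closure of $\Vaff(f,g)$. Since $C$ is a projective variety of dimension $\geq 1$, it meets the hyperplane $W=0$, and this gives a point of $\Vaff_\xx(f)_\infty$. The identical argument, with $f-t$ in place of $f$ over $K(t)$, gives a point of $\transf_\xx(f)_\infty$. Both sets are therefore exactly the singleton $\{p\}$, which is the claim.
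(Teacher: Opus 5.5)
Your proof is correct and follows the same route as the paper: the homogenizations confine the points at infinity of both $\Vaff_\xx(f)$ and $\transf_\xx(f)$ to $\Vproj(f_1,g_1)$, and Lemma \ref{lemma:numsol-Kt} transfers the singleton from $K$ to $K(t)$. The only difference is the nonemptiness step, which the paper settles by simply asserting that $\Vaff_\xx(f)$ is not closed in projective space. Your dimension argument justifies that assertion explicitly: every component of $\Vproj(F,G)$ has dimension $\geq 1$, so none can lie in the finite set $\Vproj(W,f_1,g_1)$. Each component therefore meets the affine chart, lies in the closure, and meets $W=0$.
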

\begin{proof}
Let $F,G$ be the homogenizations of $f$ and $g$, respectively. Since $\Vaff_\xx(f)=\Vaff(f,g)$, the closure $\overline{\Vaff_\xx(f)}$ is contained into $\Vproj(F,G)$. Let $W$ be the homogenization variable: when $W=0$, that is, if we consider $\Vaff_\xx(f)_\infty$, we have $F(x,y,z)=0$ if and only if $f_1(x,y,z)=0$, and similarly for $G$ and $g_1$. Hence, $\overline{\Vaff_\xx(f)}\subseteq\Vaff_\xx(f)\cup\Vproj(f_1,g_1,W)$.

If $p:=[x:y:z]$ is the unique point of $\Vproj(f_1,g_1)\subset\proj^2_K$, then $\overline{\Vaff_\xx(f)}\subseteq \Vaff_\xx(f)\cup\{[0:x:y:z]\}$. Since $\Vaff_\xx(f)$ is not closed in $\proj^2$, we must have equality, and thus $\Vaff_\xx(f)_\infty=\{[0:x:y:z]\}$ is a single point.

By Lemma \ref{lemma:numsol-Kt}, $\Vproj^{K(t)}(f_1,g_1)\subset\proj^2_{K(t)}$ is a singleton too. Since the leading homogeneous component of $f-t$ is still $f_1$, the reasoning above also holds for $\transf_\xx(f)$, and $\transf_\xx(f)_\infty=\{[0:x:y:z]\}$ is a singleton.
\end{proof}

The previous lemma allows us to concretely analyze some localizations of $\recip(K[X])$; the following example will be generalized in different ways in Examples \ref{ex:cubics} and \ref{ex:fermat}.
\begin{ex}
Let $K$ be a field of characteristic $\neq 3$, and let $\xx\subseteq\aff^3_K$ be the cubic hypersurface defined by $X^3+Y^3+Z^3+1=0$, and let $f(X,Y,Z):=X+Y$. We note that, if $K$ is algebraically closed, $\Vaff_\xx(f)$ is the union of three lines (namely, $Z=1$, $Z=\omega$ and $Z=\omega^2$, where $\omega$ is a cubic root of unity). The system
\begin{equation}\label{eq:esempio-numsol}
\left\{\begin{aligned}
& X^3+Y^3+Z^3=0\\
& X+Y=0
\end{aligned}\right.
\end{equation}
has a single solution in $\proj^2_K$, namely $[1:-1:0]$. By Proposition \ref{prop:transf-infty}, $p:=[0:1:-1:0]$ is the unique point at infinity of $\Vaff_\xx(f)$ and $\transf_\xx(f)$.

We claim that $p$ is regular in $\overline{\transf_\xx(f)}$. Indeed, taking homogeneous coordinates, the Jacobian matrix of the curve is
\begin{equation*}
\begin{pmatrix}
3W^2 & 3X^3 & 3Y^2 & 3Z^2\\
-t & 1 & 1 & 0
\end{pmatrix}
\end{equation*}
which calculated in $p=[0:1:-1:0]$ gives
\begin{equation*}
\begin{pmatrix}
0 & 3 & 3 & 0\\
-t & 1 & 1 & 0
\end{pmatrix}.
\end{equation*}
As this matrix has rank $2$, $p$ is regular. By Theorem \ref{teor:ptoinf} $\recip(K(t)[\transf_X(f)])$ is nontrivial. By Proposition \ref{prop:transf}\ref{prop:transf:recip}, $\recip(K(t)[\transf_\xx(f)])\simeq\recip(K[\xx])_{\pp_f}$, and thus $\dim\recip(K[\xx])_{\pp_f}=1$. By Proposition \ref{prop:dim2-infty-irred}, $f\notin\recip(K[\xx])$, and thus $\recip(K[\xx])_{\pp_f}$ is a proper localization of $\recip(K[\xx])$. Since reciprocal complements are local, it follows that $\dim\recip(K[\xx])=2$.
\end{ex}

In the previous example, we showed explicitly that the point at infinity of $\transf_\xx(f)$ was nonsingular. The following proposition gives a general criterion.
\begin{prop}\label{prop:transf-nonsing}
Let $\xx\subset\aff^n_K$ be a variety, and let $f\in K[X_1,\ldots,X_n]$ be such that the image of $f$ in $K[\xx]$ is not constant. Let $p\in \Vaff_\xx(f)_\infty$, and suppose that $p$ is nonsingular in $\overline{\xx}$.
\begin{enumerate}[(a)]
\item\label{prop:transf-nonsing:VXF->transf} If $p$ is nonsingular in $\overline{\Vaff_\xx(f)}$, then $p$ is nonsingular in $\overline{\transf_\xx(f)}$.
\item\label{prop:transf-nonsing:notlinear} If $f$ is not linear, then $p$ is nonsingular in $\overline{\transf_\xx(f)}$ if and only if it is nonsingular in $\overline{\Vaff_\xx(f)}$.
\item\label{prop:transf-nonsing:linear} If $f$ is linear and $\xx$ is an hypersurface, then $p$ is nonsingular in $\overline{\transf_\xx(f)}$.
\end{enumerate}
\end{prop}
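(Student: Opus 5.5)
We work with the Jacobian criterion at the point $p=[0:x_1:\cdots:x_n]$, using homogeneous coordinates $W,X_1,\ldots,X_n$. Let $G_1,\ldots,G_m$ be homogeneous generators of the ideal of $\overline{\xx}$, and let $F$ be the homogenization of $f$, of degree $d$. Then $\overline{\Vaff_\xx(f)}$ is cut out near $p$ by $G_1,\ldots,G_m,F$, and $\overline{\transf_\xx(f)}$ by $G_1,\ldots,G_m,F-tW^d$. Both are codimension one in $\overline{\xx}$ near $p$: this follows from Proposition \ref{prop:transf}\ref{prop:transf:dim} for the transform, and for $\Vaff_\xx(f)$ because $f$ is not constant. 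Since $p$ is nonsingular on $\overline{\xx}$, the rows $\nabla G_j(p)$ have rank $c=\operatorname{codim}\overline{\xx}$. Hence nonsingularity of $p$ on each curve is equivalent to the last row, $\nabla F(p)$ or $\nabla(F-tW^d)(p)$ respectively, not lying in the row span $R$ of the $\nabla G_j(p)$. Two points need a short check here. First, by Lemma \ref{lemma:numsol-Kt} and the flatness argument, the generators still define $\overline{\xx}$ over $K(t)$, so the rank computation can be done over $K(t)$. Second, if the $G_j,F$ fail to generate the radical ideal, the ideal-theoretic tangent space only grows. So ``rank $c+1$'' implies nonsingular, while the converse needs the ideal $(G_j,F)$ to be reduced at $p$. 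I expect this to be the main technical point, and I would handle it by working in the regular local ring $\local_{\overline{\xx},p}$. There, nonsingularity of $V(h)$ at $p$ means $h\notin\mm_p^2$, which is the same as the differential of $h$ being nonzero in the cotangent space $\mm_p/\mm_p^2$, i.e.\ $\nabla h(p)\notin R$.

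Now compare the two gradients. We have $\nabla(F-tW^d)=\nabla F-t\,dW^{d-1}e_W$, where $e_W$ is the $W$-coordinate vector. At $p$ we have $W=0$, so if $d\geq 2$ the correction term vanishes and $\nabla(F-tW^d)(p)=\nabla F(p)$. This gives \ref{prop:transf-nonsing:notlinear} at once: for non-linear $f$ the two conditions coincide. It also gives \ref{prop:transf-nonsing:VXF->transf} in the case $d\ge2$.

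For $d=1$, $F$ is linear and $\nabla(F-tW)(p)=\nabla F(p)-t e_W$. Its entries are constants in $K$ except for a $-t$ in the $W$-slot. Since $R$ is spanned by vectors with entries in $K$, the condition $\nabla F(p)-te_W\in R\otimes K(t)$ would force, after comparing coefficients of the basis $1,t$ over $K$ (up to the finite extension where $p$ lives, which is algebraic and hence linearly disjoint from $K(t)$), both $\nabla F(p)\in R$ and $e_W\in R$. So if $\nabla F(p)\notin R$, then certainly $\nabla(F-tW)(p)\notin R$, which finishes \ref{prop:transf-nonsing:VXF->transf}.

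For \ref{prop:transf-nonsing:linear}, $\xx=\Vaff(g)$ is a hypersurface and $R$ is spanned by the single vector $\nabla G(p)$, which is nonzero. It suffices to show $e_W\notin R$, i.e.\ that $\nabla G(p)$ has some nonzero $X_i$-component. By Euler's formula, $\sum_i x_i\,\partial G/\partial X_i(p)+0\cdot\partial G/\partial W(p)=\deg G\cdot G(p)=0$. If all the $X_i$-partials vanished, then $\nabla G(p)$ would be a multiple of $e_W$, meaning $\partial G/\partial W(p)\neq0$ while the leading form $g_1=G(0,X)$ has vanishing gradient at $p$. I would argue this is impossible unless the characteristic divides $\deg G$; if that case is problematic, the cleaner route is the following. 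If $\nabla G(p)=\lambda e_W$, then $\nabla(F-tW)(p)\in R$ would require $\nabla F(p)=0$, which fails since $F$ is linear and non-constant in the $X$'s. So $\nabla(F-tW)(p)=\nabla F(p)-te_W$ is a multiple of $\lambda e_W$ only when $\nabla F(p)$ has zero $X$-part, which is impossible. Thus the rank is $2$ and $p$ is nonsingular.
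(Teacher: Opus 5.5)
Your route is the paper's: compare $\nabla F(p)$, resp.\ $\nabla(F-tW^d)(p)$, with the span $R$ of the rows $\nabla G_j(p)$. For $d\ge 2$ the correction term vanishes because $W(p)=0$. For $d=1$ you separate the constant part from the $t$-part; your linear-disjointness formulation is a cleaner version of the paper's ``clear denominators and set $t=0$''. In (c), your case split on whether $e_W\in R$ is equivalent to the paper's ratio argument ($\partial_{X_i}G(p)/a_i\in K$ versus $\partial_{X_0}G(p)/(b-t)\notin K$).

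The genuine gap is the step you yourself single out. Parts (a) and (b) need the implication ``$p$ nonsingular on $\overline{\Vaff_\xx(f)}$ $\Rightarrow$ $\nabla F(p)\notin R$'', and your justification does not give it. In $\local_{\overline{\xx},p}$ the condition $F\notin\mm_p^2$ characterizes regularity of the scheme $\local_{\overline{\xx},p}/(F)$, not of the reduced variety $\overline{\Vaff_\xx(f)}$. Moreover, $\Vproj(G_1,\ldots,G_m,F)$ may contain extra components inside $\{W=0\}$ through $p$, so your claim that it cuts out $\overline{\Vaff_\xx(f)}$ near $p$ is also false in general.

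In fact no argument can close this step, because (a) and (b) fail as literally stated. Take $\xx=\aff^2$, $f=X^2$, $p=[0:0:1]$. Then $\overline{\Vaff_\xx(f)}$ is the line $X=0$, which is nonsingular at $p$. On the other hand, $X^2-t$ is irreducible over $K(t)$, so $\overline{\transf_\xx(f)}=\Vproj(X^2-tW^2)$, whose equation lies in $\mm_p^2$; hence $p$ is singular there.

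The paper's proof makes the same identification tacitly. What both arguments really prove are the rank statements, i.e.\ (a)--(c) for the schemes cut out by $G_1,\ldots,G_m,F$ and by $G_1,\ldots,G_m,F-tW^d$. You should state that reading, or an explicit hypothesis such as ``$G_1,\ldots,G_m,F$ define $\overline{\Vaff_\xx(f)}$ with its reduced structure near $p$'', rather than claim to derive it.

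For (c), only the safe direction ``rank $2$ $\Rightarrow$ nonsingular'' is needed. Your two cases do give rank $2$: $e_W\notin R$ is handled by the $1,t$ separation, and $\nabla G(p)=\lambda e_W$ by the nonzero $X$-part of $\nabla F$. Drop the Euler-formula claim, though, which is false: $G=Y^2Z-X^3+WZ^2$ has $\nabla G(p)=e_W$ at $p=[0:0:0:1]$ in every characteristic.

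Moreover, the safe direction presupposes $p\in\overline{\transf_\xx(f)}$, which neither you nor the paper verifies and which can fail. For $\xx=\Vaff(XY-Z)$ and $f=X$, one has $p=[0:0:1:0]\in\Vaff_\xx(f)_\infty$, but $\overline{\transf_\xx(f)}=\Vproj(X-tW,Z-tY)$ does not contain $p$; here $\Vproj(G,F-tW)$ picks up the extra line $X=W=0$ through $p$. The membership does hold under the hypothesis of Proposition~\ref{prop:transf-infty}, which is the situation in which the paper applies the result.
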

\begin{proof}
Let $\xx=\Vaff(g_1,\ldots,g_m)$, and let $J_1$ be the Jacobian matrix of $G_1,\ldots,G_m$, the homogenizations of $g_1,\ldots,g_m$ with respect to $X_0$. Let $F$ be the homogenization of $f$; then, the homogenization of $f-t$ is $F-tX_0^d$, where $d$ is the degree of $F$. Hence, the Jacobian matrices of $G_1,\ldots,G_m$ and $G_1,\ldots,G_m,F-t$ are, respectively,
\begin{equation*}
J:=\begin{pmatrix}
J_1\\
\nabla F
\end{pmatrix}\quad\text{and}\quad J':=\begin{pmatrix}
J_1\\
\nabla(F-tX_0^d)
\end{pmatrix}.
\end{equation*}
If $d>1$, then 
\begin{equation*}
\frac{\partial(F-tX_0^d)}{\partial {X_0}}(p)=\frac{\partial F}{\partial X_0}(p)-dX_0^{d-1}(p)=\frac{\partial F}{\partial X_0}(p)
\end{equation*}
since $p$ is at infinity and thus is $X_0$-component is $0$. Hence, $J(p)=J'(p)$ and in particular they have the same rank. Since $\dim\transf_\xx(f)=\dim \xx-1=\dim \Vaff_\xx(f)$ (Proposition \ref{prop:transf}\ref{prop:transf:dim}) and the nonsingularity of $p$ only depends on the rank of the Jacobian matrix, it follows that $p$ is nonsingular in $\overline{\transf_\xx(f)}$ if and only if it is nonsingular in $\overline{\Vaff_\xx(f)}$. Hence \ref{prop:transf-nonsing:notlinear} holds.

Suppose $d=1$. Then, the Jacobian matrices of $G_1,\ldots,G_m$ and $G_1,\ldots,G_m,F-t$ are, respectively,
\begin{equation*}
J:=\begin{pmatrix}
J_1\\
\nabla F
\end{pmatrix}\quad\text{and}\quad J':=\begin{pmatrix}
J_1\\
\nabla(F-tX_0)
\end{pmatrix}=:=\begin{pmatrix}
J_1\\
\nabla(F)-te_{X_0}
\end{pmatrix}
\end{equation*}
where $e_{X_0}$ is the basic component relative to $X_0$. If $p$ is nonsingular in $\overline{\Vaff_\xx(f)}$, then $\nabla F(p)$ is not a linear combination of the rows $\nabla G_1(p),\ldots\nabla G_m(p)$ of $J_1(p)$. If $p$ were to be singular in $\transf_\xx(f)$, on the other hand, then $(\nabla F-te_{X_0})(p)$ would be a linear combination of these rows, i.e., there would be $\lambda_1,\ldots,\lambda_m\in K(t)$, not all zeros, such that
\begin{equation}\label{eq:prop:transf}
\nabla(F-te_{X_0})(p)=\left(\lambda_1\nabla G_1+\cdots+\lambda_m\nabla G_m\right)(p)=0.
\end{equation}
Reducing denominators, we can suppose without loss of generality that $\lambda_1,\ldots,\lambda_m\in K[t]$ and not all of them are divisible by $t$. In particular, \eqref{eq:prop:transf} would hold also calculated in $t=0$; however, in this case we would have an equation of linear dependence for $\nabla F$, against the nonsingularity of $p$ in $\overline{\Vaff_\xx(f)}$. Hence $p$ is also nonsingular in $\overline{\transf_\xx(f)}$. Putting together this and the previous case, \ref{prop:transf-nonsing:VXF->transf} holds.

\ref{prop:transf-nonsing:linear} Suppose that $d=1$ and that $\xx$ is an hypersurface: then, $n=1$. Write $f=a_1X_1+\cdots+a_nX_n+b$: then, the Jacobian matrix of $G_1,F-tX_0$ at $p$ is
\begin{equation*}
J(p):=\begin{pmatrix}
\partial_{X_0}G(p) & \partial_{X_1}G(p) & \cdots & \partial_{X_n}G(p)\\
 b-t & a_1 & \cdots & a_m
\end{pmatrix}.
\end{equation*}
The first row of $J(p)$ is nonzero since $p$ is regular in $\xx$; let $i$ be such that $a_i\neq 0$. If $p$ is not regular in $\overline{\transf_\xx(f)}$, the two rows of $J(p)$ must be proportional, and the ratio between the two rows must be $\partial_{X_i}G(p)/a_i\in K$. However, $\partial_{X_0}G(p)/(b-t)\notin K$ as $\partial_{X_0}G(p),b\in K$ while $t\notin K$, a contradiction. Hence $J(p)$ has rank $2$ and $p$ is nonsingular in $\overline{\transf_\xx(f)}$.
\end{proof}

The previous results can be summarized in the following criterion.
\begin{prop}\label{prop:affine-irred-VXFreg}
Let $\xx:=\Vaff(f)\subset\aff^3_K$ be an affine surface such that $\xx_\infty$ is irreducible. Suppose that there is a point $p\in\xx_\infty$ and a function $g\in K[X,Y,Z]$ such that:
\begin{itemize}
\item $p$ is a nonsingular point of $\overline{\xx}$;
\item $\overline{\Vaff_\xx(g)}\cap\xx_\infty=\{p\}$;
\item $g$ is linear; \emph{or} $p$ is a regular point of $\overline{\Vaff_\xx(g)}$.
\end{itemize}
Then, $\dim\recip(K[\xx])=2$.
\end{prop}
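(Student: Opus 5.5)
The plan is to follow the argument of the cubic example, now in general form. Note the clash of letters: in the statement $f$ defines the surface and $g$ is the auxiliary function, so the transform to use is $\transf_\xx(g)$. The aim is to show that $\recip(K(t)[\transf_\xx(g)])$ has dimension $1$, and then to lift this to $\dim\recip(K[\xx])=2$ via Proposition \ref{prop:transf}\ref{prop:transf:recip}.

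First, the image of $g$ in $K[\xx]$ is not constant. Indeed, $\overline{\Vaff_\xx(g)}\cap\xx_\infty=\{p\}$ is nonempty, so $\Vaff_\xx(g)$ is nonempty and is not all of $\xx$, since otherwise its closure would meet $\xx_\infty$ in the whole curve $\xx_\infty$. The same observation shows that $g$ is not a unit of $K[\xx]$. Hence Proposition \ref{prop:transf} applies: $\transf_\xx(g)$ is an irreducible curve over $K(t)$, and $\recip(K(t)[\transf_\xx(g)])\simeq\recip(K[\xx])_{\pp_g}$.

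Next, I determine the points at infinity of $\transf_\xx(g)$. Let $f_1,g_1$ be the leading forms. Then $\xx_\infty=\Vproj(f_1,W)$, and the hypothesis says that $\Vproj(f_1,g_1)\subset\proj^2_K$ is the single point $p$. I will use the hypothesis in this form, arguing as follows. Since $\xx_\infty$ is an irreducible curve, $g_1$ cannot vanish identically on it, or $\overline{\Vaff_\xx(g)}\cap\xx_\infty$ would be larger. Then, if needed, I identify $\Vproj(f_1,g_1)$ with the set-theoretic intersection using the usual fact that every component of the projective hypersurface section $\Vproj(F,G)$ has dimension $1$ and so meets $W=0$ inside $\Vproj(f_1,g_1)$. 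With this, Proposition \ref{prop:transf-infty} gives $\transf_\xx(g)_\infty=\{p\}$; here Lemma \ref{lemma:numsol-Kt} handles the base change to $K(t)$.

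Then I need $p$ to be nonsingular on $\overline{\transf_\xx(g)}$. If $g$ is linear, this is Proposition \ref{prop:transf-nonsing}\ref{prop:transf-nonsing:linear}, since $\xx$ is a hypersurface and $p$ is nonsingular in $\overline{\xx}$. Otherwise, $p$ is regular in $\overline{\Vaff_\xx(g)}$ by hypothesis, and Proposition \ref{prop:transf-nonsing}\ref{prop:transf-nonsing:VXF->transf} applies. So $\transf_\xx(g)$ is regular at infinity with exactly one point at infinity. By Theorem \ref{teor:ptoinf}, $\recip(K(t)[\transf_\xx(g)])$ is not a field, and since its dimension is at most $1$, it has dimension $1$. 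Therefore $\dim\recip(K[\xx])_{\pp_g}=1$.

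Finally, I show that $\pp_g$ is not the maximal ideal. Since $\xx_\infty$ is irreducible and contains the nonsingular point $p$, Proposition \ref{prop:dim2-infty-irred}\ref{prop:dim2-infty-irred:intersec} gives $K[\xx]\cap\recip(K[\xx])=K$. So $g\notin\recip(K[\xx])$, which means $1/g$ is a nonunit of the local ring $\recip(K[\xx])$. Hence $1/g$ lies in the maximal ideal but not in $\pp_g$, so $\pp_g$ is a proper prime of height $1$ strictly inside the maximal ideal. This gives $\dim\recip(K[\xx])\geq2$, and the reverse inequality $\dim\recip(K[\xx])\leq2$ comes from \cite[Theorem 3.2]{guerrieri-recip}.

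The main obstacle I expect is the second step: turning the closure hypothesis into the algebraic statement $\Vproj(f_1,g_1)=\{p\}$ that Proposition \ref{prop:transf-infty} needs, or else reproving that proposition directly from the closure condition, including its transfer to $K(t)$.
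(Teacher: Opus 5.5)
Your route is the paper's own, step for step: non-constancy of $g$, Proposition~\ref{prop:dim2-infty-irred} to get $g\notin\recip(K[\xx])$, Proposition~\ref{prop:transf}, then Propositions~\ref{prop:transf-infty} and~\ref{prop:transf-nonsing}, Theorem~\ref{teor:ptoinf}, and locality. Your last paragraph is more careful than the paper's.

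The gap is in the step you flagged. The claim that $g_1$ cannot vanish identically on $\xx_\infty$ is false: $\overline{\Vaff_\xx(g)}$ depends only on the class of $g$ modulo $f$, while $g_1$ depends on the polynomial chosen. Take $K=\mathbb{C}$, $f=X^2+Y^2+Z^2-1$, $g=X^2+Y^2+Z^2+X+iY$ and $p=[0:1:i:0]$. On $\xx$ we have $g=x+iy+1$, so $\Vaff_\xx(g)$ is the parabola $(z^2/2-1,\,iz^2/2,\,z)$. Its closure is the smooth conic $Z^2+2iYW=0$ in the plane $X+iY+W=0$, which meets $\xx_\infty$ only at $p$. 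Since $\overline{\xx}$ is smooth, every hypothesis holds, with the third in the form ``$p$ regular on $\overline{\Vaff_\xx(g)}$''. Yet $g_1=f_1$, so $\Vproj(f_1,g_1)$ is the whole conic at infinity and Proposition~\ref{prop:transf-infty} cannot be invoked. Moreover $G-F=W(X+iY+W)$, so $\Vproj(F,G)\supseteq\xx_\infty$ and the Jacobian of $(F,G)$ at $p$ has rank $1$. Hence the proof of Proposition~\ref{prop:transf-nonsing} does not apply to this $g$ either. The paper's proof makes the same leap (``$g$ is not constant over $\xx_\infty$, and thus $f_1,g_1$ are coprime'') without justification.

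The repair is to choose the representative. Since $(f,g-t)=(f,g+kf-t)$, both $\transf_\xx(g)$ and the set $\Vaff_\xx(g)$ depend only on the class of $g$. So while $f_1\mid g_1$ you may replace $g$ by $g-(g_1/f_1)f$, which has smaller degree. This cannot end at a constant, and a linear $g$ needs no change: $f_1\mid g_1$ with $g$ linear would force $g$ to be constant on $\xx$. In the example the reduction stops at the linear $X+iY+1$.

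When $f_1$ is irreducible this makes $f_1,g_1$ coprime. Then your dimension count works: no component of $\Vproj(F,G)$ lies in the finite set $\Vproj(F,G,W)$, so $\Vproj(F,G)=\overline{\Vaff_\xx(g)}$, and this yields $\Vproj(f_1,g_1)=\{p\}$, which the paper only asserts. If $f_1$ is a power of an irreducible form, you need another argument. One option is to show that the points of $\xx_\infty$ on $\overline{\{g=\lambda\}}$ are base points of the pencil, hence independent of $\lambda$.

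A similar caveat affects the regularity step. Part (a) of Proposition~\ref{prop:transf-nonsing} is proved via the Jacobian of $(F,G)$, so ``$p$ regular on $\overline{\Vaff_\xx(g)}$'' must be read scheme-theoretically. For $g=(X+iY+1)^2$ in the same example, the reduced closure is regular at $p$. But over $\overline{K(t)}$, $\overline{\transf_\xx(g)}$ splits into the two branches $X+iY+1=\pm\sqrt t$, both through $p$, so it is singular there. The conclusion then still holds, but only via condition (iii) of Theorem~\ref{teor:ptoinf}.
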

\begin{proof}
By construction, $g$ is not a constant function, and thus by Proposition \ref{prop:dim2-infty-irred}\ref{prop:dim2-infty-irred:intersec} $g\notin\recip(K[\xx])$. Therefore, $\recip(K[\xx])[g]$ is a proper localization of $\recip(K[\xx])$. By Proposition \ref{prop:transf}\ref{prop:transf:recip}, $\recip(K[\xx])[g]$ is isomorphic to the reciprocal complement of $K(t)[\transf_\xx(g)]$. 

Moreover, by hypothesis $g$ is not constant over $\xx_\infty$, and thus the leading homogeneous components $f_1,g_1$ of $f$ and $g$ are coprime, hence $\Vproj(f_1,g_1)$ is finite. By hypothesis, $\overline{\Vaff_\xx(g)}$ has a unique point at infinity, namely $p$; by Proposition \ref{prop:transf-infty}, $p$ is the unique point at infinity of $\transf_\xx(g)$, and $p$  is nonsingular by Proposition \ref{prop:transf-nonsing} (by \ref{prop:transf-nonsing:linear} if $f$ is linear, by \ref{prop:transf-nonsing:VXF->transf} if $p$ is regular in $\overline{\Vaff_\xx(f)}$). By Theorem \ref{teor:ptoinf}, $\dim\recip(K(t)[\transf_\xx(f)])=1$. Since $\recip(K[\xx])$ is local, its dimension must be strictly greater than $1$, and thus is equal to $2$.
\end{proof}

\begin{oss}
An application of Riemann-Roch's theorem guarantees that there is always a function $\phi\in K(\xx)$ such that $p$ is the only zero of $\phi$ on $\xx_\infty$. However, it is not clear if we can also chose $\phi$ for $p$ to be regular in the zero locus of $\phi$.
\end{oss}

\section{Applications and special cases}\label{sect:applications}
In this section, we apply the results above to specific classes of surfaces, defined by specific classes of polynomials. The first result is relevant when $K$ is not algebraically closed.
\begin{prop}\label{prop:noKrational-deg23}
Let $K$ be an infinite field, and let $f\in K[X,Y,Z]$ be a polynomial of degree $2$ or $3$; let $\xx:=\Vaff(f)\subset\aff^3_K$. Suppose that:
\begin{itemize}
\item $\xx_\infty$ is irreducible;
\item $\xx_\infty$ contains non-singular points of $\xx$;
\item $\xx_\infty$ does not contain any $K$-rational point.
\end{itemize}
Then, $\dim\recip(K[\xx])=2$.
\end{prop}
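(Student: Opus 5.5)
The natural plan is to apply Proposition \ref{prop:affine-irred-VXFreg} with a \emph{linear} function $g$. Linearity is what lets us avoid checking regularity of $p$ on $\overline{\Vaff_\xx(g)}$: by Proposition \ref{prop:transf-nonsing}\ref{prop:transf-nonsing:linear}, nonsingularity of $p$ in $\overline{\transf_\xx(g)}$ then follows from nonsingularity of $p$ in $\overline{\xx}$ alone. So what we need is a point $p\in\xx_\infty$, nonsingular in $\overline{\xx}$, together with a linear form $\ell(X,Y,Z)$ whose line $\Vproj(\ell)\subset\proj^2_K$ meets the curve $\xx_\infty=\Vproj(f_1)$ (here $f_1$ is the degree-$d$ leading form, $d\in\{2,3\}$) in the single point $p$ of $\proj^2_K$. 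Once we have them, set $g:=\ell$. Then $\Vproj(f_1,\ell)=\{p\}$, so Proposition \ref{prop:transf-infty} gives $\overline{\Vaff_\xx(g)}\cap\xx_\infty=\{p\}$, and Proposition \ref{prop:affine-irred-VXFreg} yields $\dim\recip(K[\xx])=2$. Note that $\ell$ is nonconstant on $\xx$, since $f_1$ is irreducible of degree $\ge2$ and so not divisible by $\ell$.

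The key observation is that ``point'' in this paper means a closed point, that is, a Galois orbit of $\overline{K}$-points. Because $\xx_\infty$ has no $K$-rational points, every closed point of $\xx_\infty$ has degree at least $2$. A line $\Vproj(\ell)$ defined over $K$ meets the plane curve $\Vproj(f_1)$ in a $0$-dimensional scheme of total degree $d\le 3$, counted over $\overline{K}$ with multiplicity; finiteness holds because $f_1$ is irreducible of degree $\ge2$. Every closed point of the intersection has residue degree $\ge2$, so when $d\le3$ the intersection contains at most one closed point. It is nonempty, since over $\overline{K}$ a line and a curve in $\proj^2$ always meet. Hence \emph{every} line over $K$ meets $\xx_\infty$ in exactly one closed point, and the degree hypothesis is used precisely here.

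It remains to choose $\ell$ so that the point obtained is nonsingular in $\overline{\xx}$. The nonsingular points of $\overline{\xx}$ on $\xx_\infty$ form a nonempty open subset of the irreducible curve $\xx_\infty$, so the singular ones form a finite set $S$ of closed points. Since $K$ is infinite, the lines over $K$ are Zariski dense in the dual plane, and the lines passing through any point of $S$ form a proper closed subset of it. (For each $\overline{K}$-point of $S$ this is a line in the dual plane, and there are finitely many.) So we can choose $\ell$ avoiding all of them; then the unique closed point $p$ of $\Vproj(\ell)\cap\xx_\infty$ lies outside $S$ and is nonsingular in $\overline{\xx}$.

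The main obstacle is making the degree argument rigorous in the paper's language of points. We must justify that $\Vproj(f_1,\ell)$, computed as in Lemma \ref{lemma:numsol-Kt} via the radical of $(f_1,\ell)$ and its minimal homogeneous primes, decomposes into closed points whose residue degrees sum to at most $3$. The cleanest way is to restrict $f_1$ to the line $\Vproj(\ell)\cong\proj^1_K$, where it becomes a binary form of degree $d$. This form is not identically zero, and it has no linear factor over $K$ because there are no $K$-rational points. A binary form of degree $2$ or $3$ with no linear factor over $K$ is a power of a single irreducible form (degree $2$ or $3$), so its zero set is a single point. This makes the count elementary and avoids intersection multiplicities entirely.
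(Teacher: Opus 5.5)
Your proof is correct and follows essentially the same route as the paper. In both, you pick a linear $g$ over $K$ that avoids the finitely many points of $\xx_\infty$ singular on $\overline{\xx}$ (using that $K$ is infinite), and restrict the leading form to that line to get a binary form of degree at most $3$ with no $K$-rational zero. That form is therefore irreducible and vanishes at a single closed point, and you conclude via Propositions \ref{prop:transf-infty} and \ref{prop:transf-nonsing}\ref{prop:transf-nonsing:linear}. The only difference is cosmetic: you package the last step through Proposition \ref{prop:affine-irred-VXFreg}, which also makes explicit that $g\notin\recip(K[\xx])$, a point the paper's proof leaves implicit.
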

\begin{proof}
Let $\xx_{ns}$ be the set of nonsingular points of $\xx$: then, $\xx_{ns}$ is open, and thus $\xx_{ns}\cap\xx_\infty$ is an open set of $\xx_\infty$. The intersection is nonempty by hypothesis; since $K$ is infinite, it follows that all but finitely many points of $\xx_\infty$ are nonsingular. In particular, there are $a,b,c\in K$, not all zero, such that the line defined by $g(X,Y,Z):=aX+bY+cZ$ does not meet any point of $\xx_\infty$ that is singular for $\xx$.

Let $f_1$ be the leading homogeneous component of $f$, and consider the system $f_1=g=0$ in $\proj^2_K$. We can suppose without loss of generality that $c=1$: then, these points are the solutions of $h(X,Y):=f_1(X,Y,-aX-bY)$, and $h$ is a homogeneous polynomial of the same degree as $f$. Since $h$ has no $K$-rational solutions, and since its degree is at most $3$, it must be irreducible; hence its factors in the algebraic closure $\overline{K}$ must be conjugated over $K$. Therefore, the coordinates of the points in $\proj^2_{\overline{K}}$ that solve $f_1=g=0$ are conjugated over $K$, and thus they represent the same point in $\proj^2_K$.

By Proposition \ref{prop:transf-infty}, the $g$-transform $\transf_\xx(g)$ has a single point at infinity; since $g$ is linear, by Proposition \ref{prop:transf-nonsing} the point is also nonsingular for $\overline{\transf_\xx(g)}$ (note that, by construction, it is nonsingular for $\overline{\xx}$). Hence, $\dim \recip(K(t)[\transf_\xx(g)])=1$ and $\dim\recip(K[\xx])=2$, as claimed.
\end{proof}

\begin{ex}\label{ex:quadrics}
Let $K$ be a field of characteristic $\neq 2$, and let $\xx$ be an irreducible quadric, i.e., the zero set in $\aff^3$ of an irreducible polynomial $f(X,Y,Z)$ of degree two. Let $D:=K[\xx]=K[X,Y,Z]/(f)$ be its ring of functions. We claim that $\dim\recip(D)=2$ unless $\xx$ is affinely equivalent to $\Vaff(XY-1)$.

Let $f_2$ be the homogeneous component of degree $2$: we distinguish two cases.

\medskip

Suppose that $f_2$ is irreducible, so that $\xx_\infty$ is irreducible,, and write $\displaystyle{f(X,Y,Z)=\sum_{0\leq i+j+k\leq 2}a_{ijk}X^iY^jZ^k}$. Then, the gradient of the homogenization $F$ of $f$ is
\begin{equation*}
(a_{100}X+a_{010}Y+a_{001}Z+2a_{000}W,2a_{200}X+a_{100}W,2a_{020}Y+a_{010}W,2a_{002}Z+a_{001}W)
\end{equation*}
which at infinity (i.e., when $W=0$) is equal to
\begin{equation*}
(a_{100}X+a_{010}Y+a_{001}Z,2a_{200}X,2a_{020}Y,2a_{002}Z).
\end{equation*}
Since the characteristic of $K$ is not $2$, no point of $\xx_\infty$ is singular. 

If there is no $K$-rational point in $\xx_\infty$, then $\dim\recip(D)=2$ by Proposition \ref{prop:noKrational-deg23}.

Suppose that there is a $K$-rational point $p\in\xx_\infty$. Let $g(X,Y,Z):=\partial_XF(p)X+\partial_YF(p)Y+\partial_ZF(p)Z$, and note that $g\in K[X,Y,Z]$. The tangent line of $\overline{\xx}$ at $p$ is contained in the projective plane $\overline{\Vaff(g)}=\Vproj(g)$ determined by $g$. Thus, $p$ is the only point of intersection between $\overline{\Vaff(g)}$ and $\xx_\infty$, since the multiplicity of the intersection at $p$ is $2$ and $f_2$ has degree $2$. Since $g$ is linear, by Proposition \ref{prop:transf-nonsing}\ref{prop:transf-nonsing:linear} $\overline{\transf_X(g)}$ is nonsingular in $p$; by Proposition \ref{prop:affine-irred-VXFreg} $\dim(\recip(D))=2$.

\medskip

Suppose now that $f_2$ is reducible.

If $f_2$ has a linear factor of multiplicity $2$, then $\xx$ is affinely equivalent to $\Vaff(X^2+d)$ or to $\Vaff(X^2+dY)$ for some $d\in K\setminus\{0\}$. In the former case, $X^2+d$ is irreducible and $D\simeq K(\mu)[Y,Z]$, where $\mu\in\overline{K}$ satisfies $\mu^2=-d$; in particular, $\dim\recip(D)=2$. In the latter case, $D\simeq K[X,Z]$ and so again $\dim\recip(D)=2$.

Suppose that $f_d$ has two distinct linear factors. Then, $\xx$ is affinely equivalent to $\Vaff(XY+tZ+d)$ for some $t,d\in K$. If $t=0$, then $d\neq 0$ (since $\xx$ is irreducible) and so, applying an affine transformation, we can suppose $d=-1$, and
\begin{equation*}
D\simeq\frac{K[X,Y,Z]}{(XY-1)}\simeq K\left[X,X^{-1},Z\right].
\end{equation*}
As $\recip(K[X,X^{-1}])=K(X)$, we have $\recip(D)=\recip(K(X)[Z])=K(X)[Z^{-1}]_{(Z^{-1})}$. In particular, $\dim\recip(D)=1$.

If $t\neq 0$, then $\xx$ is affinely equivalent to $\Vaff(XY+tZ)$, and $D\simeq K[X,Y]$. Hence $\dim\recip(D)=2$.

In particular, we also have that $\dim\recip(D)=1$ if and only if $D$ contains units outside of $K$; it follows that $\dim\recip(D)=\dim(D)-\trdeg(K(\unit(D))/K)$, i.e., the inequality of Proposition \ref{prop:units} is an equality in this case.
\end{ex}

\begin{ex}\label{ex:cubics}
Let $K$ be an algebraically closed field of characteristic $\neq 3$ and let $f(X,Y,Z)$ be a polynomial of degree $3$; let $\xx=\Vaff(f)$ be the cubic surface associated to $f$. Suppose that:
\begin{itemize}
\item all points of $\xx_\infty$ are regular for $\xx$;
\item $\xx_\infty$ is regular (in particular, irreducible).
\end{itemize}
Then, $\dim\recip(K[\xx])=2$.

The curve $\xx_\infty$ is a cubic and, since it is regular, it contains an inflexion point $p$. Let $g\in K[X,Y,Z]$ be a linear polynomial such that $\overline{\Vaff(g)}$ contains the tangent line at $p$: since $p$ is an inflexion point, $\overline{\Vaff(g)}\cap\xx_\infty=\{p\}$. By Proposition \ref{prop:transf-nonsing}\ref{prop:transf-nonsing:notlinear}, $p$ is a nonsingular point of $\overline{\Vaff_X(g)}$, and by Proposition \ref{prop:affine-irred-VXFreg} $\dim\recip(K[\xx])=2$.
\end{ex}

\begin{prop}\label{prop:touchdegn}
Let $K$ be a field of characteristic $p$, and let $f\in K[X,Y,Z]$ be an irreducible polynomial of degree $d$, not a multiple of $p$. Let $f_d$ be its homogeneous component of maximal degree, and suppose that there is a polynomial $g(X,Y,Z)$ such that:
\begin{itemize}
\item $f_d$ is irreducible;
\item $f_d(X,Y,Z)=X^d+g(X,Y,Z)$;
\item $g(X,Y,Z)\neq 0$ and has a linear factor $g_1\neq X$;
\item if $g_1$ is such a factor, $\Vproj(g_1)\cap \Vproj(g/g_1)\cap \Vproj(f_d)=\emptyset$.
\end{itemize}
Then, $\dim\recip(K[X,Y,Z]/(f))=2$. 
\end{prop}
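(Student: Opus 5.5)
The plan is to apply Proposition \ref{prop:affine-irred-VXFreg}, taking as the test function the linear factor $g_1$ of $g$. The first hypothesis there is immediate: since $f_d$ is irreducible, $\xx_\infty=\Vproj(W,f_d)$ is irreducible. Because $g_1$ is linear, the third hypothesis of Proposition \ref{prop:affine-irred-VXFreg} holds automatically. It remains to find a point $p\in\xx_\infty$ with $\overline{\Vaff_\xx(g_1)}\cap\xx_\infty=\{p\}$, and to check that $p$ is nonsingular in $\overline{\xx}$.

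\emph{Locating the point at infinity.} I compute $\Vproj(g_1)\cap\Vproj(f_d)$ in $\proj^2_K$. Since $g_1$ divides $g$, on the line $g_1=0$ we have $f_d=X^d+g=X^d$. Hence the intersection is $\Vproj(g_1,X^d)=\Vproj(g_1,X)$. The statement says only $g_1\neq X$; I read this as ``$g_1$ is not proportional to $X$'', which I believe is the intended meaning. If $g_1$ were a nonzero multiple of $X$, then $g$ would be divisible by $X$, so $X$ would divide $f_d$, against irreducibility (unless $d=1$, which is a degenerate case to treat separately). Under this reading, $g_1$ and $X$ are two distinct lines, so they meet in exactly one point $p$, which is $K$-rational. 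Then $g_1$ is nonconstant on $\xx$, and, arguing as in Proposition \ref{prop:transf-infty}, $\overline{\Vaff_\xx(g_1)}\cap\xx_\infty=\{[0:p]\}$.

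\emph{Nonsingularity of $p$ in $\overline{\xx}$.} Let $F$ be the homogenization of $f$. At a point with $W=0$, the partial derivatives $\partial_XF,\partial_YF,\partial_ZF$ reduce to those of $f_d$, because every other term carries a factor of $W$. I then compute
\begin{equation*}
\nabla f_d=dX^{d-1}e_X+\frac{g}{g_1}\nabla g_1+g_1\nabla\!\left(\frac{g}{g_1}\right).
\end{equation*}
At $p$ we have $X=0$ and $g_1=0$, so for $d\geq 2$ this gives $\nabla f_d(p)=(g/g_1)(p)\,\nabla g_1$. Here $\nabla g_1$ is a nonzero constant vector. Moreover $(g/g_1)(p)\neq 0$, because $p\in\Vproj(g_1)\cap\Vproj(f_d)$ and the last hypothesis says this set does not meet $\Vproj(g/g_1)$. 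So the gradient of $F$ at $[0:p]$ is nonzero, and $p$ is a nonsingular point of $\overline{\xx}$.

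The case $d=1$ I would treat separately. There $g_1$ is proportional to $g=f_d-X$, and nonsingularity is automatic because $F$ is linear. The hypothesis that $p\nmid d$ should enter at this point: it guarantees that the $X$-derivative term, and any Euler-relation argument about the $W$-component, behaves as expected. In particular it rules out $\partial_X f_d$ degenerating for a reason unrelated to $X^{d-1}$. I would double-check exactly where it is needed, but it is not essential for the main line of the argument.

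\emph{Conclusion and main obstacle.} With all hypotheses verified, Proposition \ref{prop:affine-irred-VXFreg} (using part \ref{prop:transf-nonsing:linear} of Proposition \ref{prop:transf-nonsing}) gives $\dim\recip(K[X,Y,Z]/(f))=2$. I expect the main obstacle to be the careful nonsingularity verification at $p$, together with making sure the point at infinity of $\Vaff_\xx(g_1)$ is really a single point in $\proj^3_K$ in the sense of Proposition \ref{prop:transf-infty}. That proposition requires $\Vproj(f_d,g_1)$ to be a singleton, and this is exactly what the computation $\Vproj(g_1,X)$ provides.
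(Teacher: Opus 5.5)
Your proposal is correct and follows essentially the paper's argument. Like you, the paper applies Proposition~\ref{prop:affine-irred-VXFreg} with the linear test function $g_1$, finds the unique point at infinity as the intersection of the lines $g_1=0$ and $X=0$, and gets nonsingularity of $\overline{\xx}$ there from $(g/g_1)(p)\neq 0$. The only difference is cosmetic: the paper first changes coordinates so that $g_1=Y$ and checks only $\partial_Y F(p)$, instead of computing the full gradient. As your own computation shows, the term $dX^{d-1}$ vanishes at $p$ anyway, so the hypothesis that $d$ is not a multiple of the characteristic is never needed in this step, and the paper's proof does not use it either.
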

\begin{proof}
Since $f$ and $f_d$ are irreducible, so are $\xx:=\Vaff(f)$ and $\xx_\infty$.

%Since $f$ is irreducible, so io $\xx:=\Vaff(f)$.

Up to a linear transformation, we can suppose without loss of generality that $g_1=Y$; we set $g_2:=g_1/Y$. Every point $[0:x:y:z]\in \Vproj(g_1)\cap \xx_\infty$ must satisfy $y=0$ and, since $Y$ divides $g$, also $x=0$; hence it must be $p:=[0:0:0:1]$.

By Proposition \ref{prop:transf-nonsing}\ref{prop:transf-nonsing:linear}, $p$ is regular in $\overline{\transf_\xx(g)}$; we claim that it is regular in $\overline{\xx}$. Indeed, consider the partial derivative with respect to $Y$ of $F$, the homogenization of $f$. Since $p\in\xx_\infty$, we can ignore the terms of degree $<d$ in $X,Y,Z$, and so
\begin{equation*}
\partial_Y F(W,X,Y,Z)(p)=\partial_Y f_d(W,X,Y,Z)(p)=g_2(p)+Y\partial_Yg_1(p).
\end{equation*}
We have $g_1(p)=0$ while $g_2(p)\neq 0$ (otherwise $p\in \Vproj(g_1)\cap \Vproj(g_2)\cap \Vproj(f_d)$, against the hypothesis). Thus the gradient of $F$ is nonzero in $p$ and $p$ is regular for $\overline{\xx}$.

Thus $\xx$ and $g$ satisfy the hypothesis of Proposition \ref{prop:affine-irred-VXFreg}, and $\recip(K[X,Y,Z]/(f))$ has dimension $2$.
\end{proof}

\begin{cor}\label{cor:touchdegn}
Let $K$ be a field of characteristic $p$, and let $f\in K[X,Y,Z]$ be an irreducible polynomial of degree $d$, not a multiple of $p$. Let $f_d$ be its homogeneous component of maximal degree, and suppose that:
\begin{itemize}
\item $f_d$ is irreducible;
\item $f_d(X,Y,Z)=X^d+g(Y,Z)$;
\item $g(Y,Z)$ has a simple linear factor.
\end{itemize}
Then, $\dim\recip(K[X,Y,Z]/(f))=2$. 
\end{cor}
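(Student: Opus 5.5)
The plan is to deduce the corollary directly from Proposition \ref{prop:touchdegn}, applied with $g(X,Y,Z):=g(Y,Z)$. The first two hypotheses of the proposition ($f_d$ irreducible, $f_d=X^d+g$) are given verbatim. For the third, let $g_1$ be a simple linear factor of $g(Y,Z)$. Then $g\neq 0$ and $g_1$ is a linear form in $Y,Z$ alone, so in particular $g_1\neq X$.

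The only real work is the fourth hypothesis: $\Vproj(g_1)\cap\Vproj(g/g_1)\cap\Vproj(f_d)=\emptyset$. Set $g_2:=g/g_1$. Since $g$ is a binary form in $Y,Z$, after a linear change of the variables $Y,Z$ (which leaves $X$ and the shape $X^d+g(Y,Z)$ unchanged) we may assume $g_1=Y$. Then $g_2\in K[Y,Z]$ is homogeneous of degree $d-1$ and not divisible by $Y$, because $g_1$ is a simple factor. Hence $g_2(0,Z)=cZ^{d-1}$ with $c\neq0$.

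Now take a point $[x:y:z]$ in the triple intersection. The condition $g_1=0$ gives $y=0$. The condition $g_2=0$ then gives $cz^{d-1}=0$, so $z=0$; here one should treat $d=1$ separately, but then $g_2$ is a nonzero constant and the intersection is already empty. Finally, $f_d=0$ gives $x^d+g(0,0)=x^d=0$, so $x=0$. This is not a point of $\proj^2$, so the intersection is empty.

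All hypotheses of Proposition \ref{prop:touchdegn} are therefore satisfied, and it gives $\dim\recip(K[X,Y,Z]/(f))=2$. The main, though minor, obstacle is checking that the linear change of variables in $Y,Z$ is harmless, that is, that it preserves irreducibility of $f$ and $f_d$ and the form $X^d+g$. It does, since it is an automorphism of $K[X,Y,Z]$ fixing $X$. It also does not affect the dimension of the reciprocal complement, since it induces an isomorphism of the coordinate rings.
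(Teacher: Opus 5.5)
Your proposal is correct and follows the paper's route. The paper's proof is just ``Apply Proposition \ref{prop:touchdegn}'', and you supply the hypothesis check it leaves implicit, namely that a simple linear factor $g_1$ of $g(Y,Z)$ gives $\Vproj(g_1)\cap\Vproj(g/g_1)\cap\Vproj(f_d)=\emptyset$, with that hypothesis read as holding for the chosen factor, which is how the proposition's proof uses it.
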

\begin{proof}
Apply Proposition \ref{prop:touchdegn}.
\end{proof}

The previous proposition allows an almost immediate calculation of the dimension of the reciprocal complement in several cases.
\begin{ex}
Let $f(X,Y,Z)=X^d-YZ^{d-1}+h(X,Y,Z)$, where $h$ has degree $<d$. If $d$ is not a multiple of the characteristic of $K$ and $f$ is irreducible, then $\dim\recip(K[X,Y,Z]/(f))=2$.
\end{ex}

\begin{ex}\label{ex:fermat}
Let $K$ be a field of characteristic $p\neq 2$, and let $m\inN^+$ be an integer not divisible by $p$. Let $f(X,Y,Z):=X^m+Y^m+Z^m+1$. If either
\begin{itemize}
\item $m$ is odd; or
\item $K$ contains a primitive $2m$-root of unity, 
\end{itemize}
then $\dim\recip(K[X,Y,Z]/(f(X,Y,Z)))=2$.

Using the notation of Proposition \ref{prop:touchdegn}, $f_m(X,Y,Z)=X^m+Y^m+Z^m=X^m+g(Y,Z)$ with $g(Y,Z)=Y^m+Z^m$. If $m$ is odd, then $Y+Z$ is a factor of $g$; if $K$ contains a primitive $2m$-root of unity, say $\zeta$, then $X-\zeta Y$ is a linear factor; in both cases, the linear factor is simple since $Y^m+Z^m$ is separable due to the hypothesis $p\not|m$. By Proposition \ref{prop:touchdegn},  $\dim\recip(K[X,Y,Z]/(f(X,Y,Z)))=2$.
\end{ex}

The polynomials in Proposition \ref{prop:touchdegn} and Corollary \ref{cor:touchdegn} use $K$-rational points to prove that the dimension of the reciprocal complement is $2$. Using a method similar to the proof of Proposition \ref{prop:noKrational-deg23}, we can prove a similar statement when such points do not exists.
\begin{prop}\label{prop:noKrational-Xd}
Let $K$ be a field of characteristic $p$, and let $f\in K[X,Y,Z]$ be an irreducible polynomial of prime degree $d$, non divisible by $p$. Let $f_d$ be its homogeneous component of maximal degree, and suppose that there is a polynomial $g(Y,Z)$ such that:
\begin{itemize}
\item $f_d$ is irreducible;
\item $f_d(X,Y,Z)=X^d+g(Y,Z)$;
\item $g(Y,Z)$ is irreducible.
\end{itemize}
Then, $\dim\recip(K[X,Y,Z]/(f))=2$. 
\end{prop}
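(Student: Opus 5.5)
The plan is to imitate the proof of Proposition \ref{prop:noKrational-deg23}, cutting $\xx:=\Vaff(f)$ with the linear function $X$, and then to apply Proposition \ref{prop:affine-irred-VXFreg} with $X$ as the linear cutting function. Since $f_d$ is irreducible, $\xx_\infty=\Vproj(W,f_d)$ is irreducible. The image of $X$ in $K[\xx]$ is not constant: otherwise $\xx$ would be contained in a plane $X=c$, forcing $f$ to be a multiple of $X-c$ and hence $d=1$. That degenerate case I would dispose of directly, since then $K[\xx]\simeq K[Y,Z]$ and $\dim\recip(K[\xx])=2$.

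First, the points at infinity of $\Vaff_\xx(X)$. They are the solutions in $\proj^2_K$ of $X=0$ and $f_d=0$, i.e.\ of $X=0$ and $g(Y,Z)=0$. The form $g$ is irreducible over $K$, so its linear factors over $\overline{K}$ are all conjugate under $K$-automorphisms of $\overline{K}$. Hence the corresponding points $[0:0:y:z]$ are conjugate and represent a single point $p\in\proj^3_K$. This shows $\overline{\Vaff_\xx(X)}\cap\xx_\infty=\{p\}$, and by Proposition \ref{prop:transf-infty} $p$ is also the unique point at infinity of $\transf_\xx(X)$.

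Second, nonsingularity of $p$ in $\overline{\xx}$. Let $F$ be the homogenization of $f$. At a point with $W=0$ only $f_d$ matters in $\partial_X F,\partial_Y F,\partial_Z F$. At $p$ we have $\partial_XF(p)=dX^{d-1}(p)=0$, $\partial_YF(p)=\partial_Yg(p)$ and $\partial_ZF(p)=\partial_Zg(p)$, so I need $g$ to have only simple roots over $\overline{K}$.
\begin{itemize}
\item If $Z\mid g$, irreducibility forces $g=cZ$ and $d=1$, which is already handled.
\item Otherwise $g(Y,1)$ is an irreducible polynomial of degree $d$. It is separable because an inseparable irreducible polynomial lies in $K[Y^p]$, so its degree is divisible by $p$, contrary to the hypothesis.
\end{itemize}
Hence each root is simple, and at a simple root $\partial_Yg$ and $\partial_Zg$ are not both zero. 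By Euler's relation $d\,g=Y\partial_Yg+Z\partial_Zg$ together with $p\nmid d$, a common zero of both partials would be a multiple root. So $\nabla F(p)\neq 0$ and $p$ is regular on $\overline{\xx}$.

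Finally, since $X$ is linear and $\xx$ is a hypersurface, Proposition \ref{prop:transf-nonsing}\ref{prop:transf-nonsing:linear} gives that $p$ is nonsingular on $\overline{\transf_\xx(X)}$. All hypotheses of Proposition \ref{prop:affine-irred-VXFreg} then hold, with $X$ as the linear cutting function and $p$ as the point, so $\dim\recip(K[\xx])=2$.

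The main obstacle I expect is the separability and simple-root argument in the second step; this is exactly where $p\nmid d$ enters. The primality of $d$ does not seem essential to this route; if the argument needs it, it would be to guarantee irreducibility or separability of $g$ in edge cases, and I would check whether it is only used to make the hypotheses consistent.
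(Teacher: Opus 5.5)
Your proposal is correct and follows the paper's proof essentially step by step: you cut with the linear function $X$, use irreducibility of $g$ to get a single (possibly non-$K$-rational) point $p$ at infinity, use that the characteristic does not divide $d$ for separability and hence regularity of $p$ on $\overline{\xx}$, and conclude with Proposition \ref{prop:transf-nonsing}\ref{prop:transf-nonsing:linear} and Proposition \ref{prop:affine-irred-VXFreg}; you are also right that primality of $d$ is never used. One small fix: over a non-closed $K$, "$X$ constant on $\xx$" only means $X$ is algebraic over $K$ (Lemma \ref{lemma:alg-const}), so $f$ would be a scalar multiple of $\lambda(X)$ for some irreducible $\lambda$ of arbitrary degree rather than of $X-c$; the contradiction should instead come from $f_d$ then being a multiple of $X^d$, i.e.\ $g=0$, which is excluded by the hypotheses.
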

\begin{proof}
Let $\xx:=\Vaff(f)$. The polynomial $g$ is homogeneous in two variables, and thus it factors linearly in $\overline{K}[Y,Z]$; moreover, since $g$ is irreducible over $K$, its factors $Y-\omega_i Z$ over $\overline{K}$ are conjugated over $K$. In particular, all the coordinates $[0:0:\omega_i:1]$ are conjugated over $K$ and thus they represent the same point in $\proj^3_K$.

Let $h(X,Y,Z):=X$: by the previous reasoning, the system $h=f_d=0$ has a single solution in $\proj^2_K$, i.e., $\transf_\xx(h)$ has a single point $p$ at infinity. Since $p$ does not divide $d$, moreover, $g$ is separable over $K$ and thus $p$ is regular for $\overline{\xx}$. Since $h$ is linear, $p$ is regular for $\overline{\transf_\xx(h)}$ (Proposition \ref{prop:transf-nonsing}) and so $\dim\recip(K(t)[\transf_\xx(h)])=1$. Since $f_d$ is irreducible, it follows that $\dim\recip(K[\xx])=\dim\recip(K[X,Y,Z]/(f))=2$.
\end{proof}

\begin{ex}
Let $K$ be a field of characteristic $p$, and let $q\neq p$ be a prime number. Let $a,t\in K\setminus\{0\}$, and let
\begin{equation*}
f(X,Y,Z):=X^q+tY^q+atZ^q+f_1(X,Y,Z),
\end{equation*}
where the degree of $f_1$ is $<q$. If $f$ is irreducible, then $\dim\recip(K[X,Y,Z]/(f))=2$.

\emph{Fact:} the polynomial $h(T):=T^q-a$ is either irreducible or it has a solution in $K$.

\emph{Proof of fact:} let $\alpha\in\overline{K}$ be a solution of $h$, and let $1\neq\zeta\in\overline{K}$ be a $q$-th root of unity. Note that $\zeta$ exists since $q$ is not equal to the characteristic of $K$. Then, the solutions of $h$ in $\overline{K}$ are $\alpha,\zeta\alpha,\zeta^2\alpha,\ldots,\zeta^{q-1}\alpha$. Suppose that $h$ is not irreducible and let $h_1$ be a proper factor of degree $k$: then, $h_1(0)=\epsilon\alpha^k\zeta^t$ for some $\epsilon\in\{1,-1\}$ and some $t\inN$. As $0<k<q$, there is a $k^\ast$ such that $kk^\ast=rq+1$ for some $r\inN$. Then,
\begin{equation*}
h_1(0)^{qk^\ast}=(\epsilon\alpha^k\zeta^t)^{qk^\ast}=\epsilon^{qk^\ast}(\alpha^q)^{rq+1}(\zeta^q)^{tk^\ast}=\epsilon^{qk^\ast}a^{rq}a.
\end{equation*}
It follows that $\alpha':=h_1(0)^{k^\ast}/(\epsilon^{k^\ast}a^r)$ is $q$-th root of $a$ and thus it is a solution of $h$. Since $h_1(0)\in K$, we have $\alpha'\in K$, i.e., $h$ has a solution in $K$.

\medskip

Suppose first that $h$ has a solution $\alpha\in K$. Then, $f$ satisfies the hypothesis of Corollary \ref{cor:touchdegn} with $g=Y-\alpha Z$ and so $\dim\recip(K[X,Y,Z]/(f))=2$. On the other hand, if $h$ does not have a solution, then by the fact $Y^q+aZ^q$ is irreducible over $K$; by Proposition \ref{prop:noKrational-Xd}, $\dim\recip(K[X,Y,Z]/(f))=2$.
\end{ex}

\begin{comment} problema: come faccio a far vedere X\notin\recip?

We end with an example where $\xx_\infty$ is reducible.
\begin{ex}
Let $K$ be a field of characteristic $\neq 2$. $f(X,Y,Z):=XYZ+X^2+Y^2+Z^2-1$, and let $\xx:=\Vaff(f)$. Note that $f$ is irreducible since it is irreducible as a polynomial in $K[X,Y][Z]$.

Let $g(X,Y,Z):=X$; then, the $g$-transform $\transf_\xx(g)$ is
\begin{equation*}
\transf_\xx(g):=\Vaff(XYZ+X^2+Y^2+Z^2-1,X-t)=\Vaff(Y^2+tYZ+Z^2+t^2-1,X-t).
\end{equation*}
The curve $\Vaff(Y^2+tYZ+Z^2+t^2-1)\subset\aff^2_{K(t)}$ has a single point at infinity, since $Y^2+tYZ+Z^2$ is irreducible over $K(t)$ (the discriminant $t^2-4$ is not a square in $K(t)$). It follows that $\transf_\xx(g)$ has a unique point at infinity; since $g$ is regular, 
\end{ex}
\end{comment}

\section{The projective plane}\label{sect:piano}
In the previous sections, we focused on studying whether the localization $\recip(K[\xx])_{\pp_f}=\recip(K(t)[\transf_\xx(f)])$ is a field. When $\xx=\proj^2$, we can actually be more precise, and understand when it is a DVR.
\begin{prop}\label{prop:curve}
Let $K$ be an infinite field, and let $f\in K[X,Y]$ be a polynomial whose degree is not a multiple of the characteristic of $K$. Let $t,Z$ be independent indeterminates over $K$. Then,
\begin{equation*}
\frac{K[X,Y]}{(f(X,Y))}\simeq K[Z]\iff\frac{K(t)[X,Y]}{(f(X,Y)-t)}\simeq K(t)[Z].
\end{equation*}
\end{prop}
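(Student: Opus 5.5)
The plan is to reduce both sides to the statement that $f$, respectively $f-t$, is a \emph{coordinate} (a variable) of the relevant polynomial ring, and then to compare the two coordinate statements by extending and specializing the base field. The key external tool is the Abhyankar--Moh(--Suzuki) embedding theorem in the form valid in arbitrary characteristic. Let $L$ be a field and $h\in L[X,Y]$ with $L[X,Y]/(h)\simeq L[Z]$, and suppose $\deg h$ is not a multiple of $\mathrm{char}\,L$. Then there is $g\in L[X,Y]$ with $L[X,Y]=L[h,g]$. The hypothesis on $\deg f$ is exactly what lets us apply this, both to $f$ over $K$ and to $f-t$ over $K(t)$; note that $\deg(f-t)=\deg f$ and $\mathrm{char}\,K(t)=\mathrm{char}\,K$.

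($\Rightarrow$) Suppose $K[X,Y]/(f)\simeq K[Z]$. By Abhyankar--Moh there is $g\in K[X,Y]$ with $K[X,Y]=K[f,g]$, and $f,g$ are algebraically independent. Extending scalars, $K(t)[X,Y]=K(t)[f,g]=K(t)[f-t,g]$. Therefore $K(t)[X,Y]/(f-t)\simeq K(t)[f-t,g]/(f-t)\simeq K(t)[g]\simeq K(t)[Z]$. This direction is routine.

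($\Leftarrow$) Suppose $K(t)[X,Y]/(f-t)\simeq K(t)[Z]$. Abhyankar--Moh over $L=K(t)$ gives $g\in K(t)[X,Y]$ with $K(t)[X,Y]=K(t)[f-t,g]$. So there are polynomials $P,Q\in K(t)[U,V]$ with $X=P(f-t,g)$ and $Y=Q(f-t,g)$. Only finitely many elements of $K[t]$ occur as denominators of coefficients of $g$, $P$, $Q$; let $\delta\in K[t]\setminus\{0\}$ be their product. Since $K$ is infinite, choose $c\in K$ with $\delta(c)\neq 0$. Specializing $t\mapsto c$ (a ring homomorphism $K[t]_\delta[X,Y]\to K[X,Y]$ fixing $X,Y$ and $f$) gives $g_c\in K[X,Y]$ with $X=P_c(f-c,g_c)$ and $Y=Q_c(f-c,g_c)$. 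Hence $K[X,Y]=K[f-c,g_c]=K[f,g_c]$, so $f$ is a coordinate of $K[X,Y]$. Then $K[X,Y]/(f)\simeq K[g_c]\simeq K[Z]$, as required.

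The main obstacle is the correct use of the Abhyankar--Moh theorem over the non-closed, non-perfect field $K(t)$ in positive characteristic. I would invoke the version stated for an arbitrary field with $\mathrm{char}\nmid\deg$ (Abhyankar--Moh and Suzuki in characteristic $0$; the characteristic-$p$ version is due to Abhyankar--Moh under the degree hypothesis). If only the algebraically closed version is available, a descent argument would be needed: compare with $\overline{K(t)}$ and use that a coordinate over $\overline{L}$ whose complementary variable can be chosen over $L$ is a coordinate over $L$, for instance via the Galois-invariance of the ``approximate roots'' construction. A secondary point to watch in the specialization step is that we specialize the \emph{inverse} expressions $P,Q$, not just $g$. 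Knowing only that the Jacobian of $(f,g_c)$ is a nonzero constant would not suffice, since that is the Jacobian conjecture.
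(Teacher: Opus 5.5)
Your proof is correct and follows the paper's argument. Both directions rest on the Abhyankar--Moh characterization (cited in the paper as van den Essen's Corollary 5.3.13) of $L[X,Y]/(h)\simeq L[Z]$ as ``$h$ is a coordinate,'' using extension of scalars for ($\Rightarrow$) and specialization $t\mapsto c\in K$ for ($\Leftarrow$); your version is slightly more careful than the paper's, since you also clear the denominators of $g$ and specialize it to $g_c$, whereas the paper specializes only the coefficients of the inverse polynomial.
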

\begin{proof}
By \cite[Corollary 5.3.13]{vanderEssen}, for a field $L$ and a polynomial $f$ satisfying the hypothesis of the theorem we have $L[X,Y]/(f)\simeq L[Z]$ if and only if there is a polynomial $g\in L[X,Y]$ such that $L[X,Y]=L[f,g]$. Since $L[f,g]=L[f-\alpha,g]$ for all $\alpha\in L$, in the claim we can replace $\frac{K(t)[X,Y]}{(f(X,Y)-t)}$ with $\frac{K(t)[X,Y]}{(f(X,Y))}$.

Suppose that $\frac{K[X,Y]}{(f(X,Y))}\simeq K[Z]$, so that $K[X,Y]=K[f,g]$ for some $g\in K[X,Y]$. Then, $X=p(f,g)$, $Y=q(f,g)$ for some polynomials $p,q\in K[u,v]\subseteq K(t)[u,v]$; in particular, $K(t)[X,Y]=K(t)[f,g]$ and so $\frac{K(t)[X,Y]}{(f(X,Y)-t)}\simeq K(t)[Z]$.

Conversely, suppose that $\frac{K(t)[X,Y]}{(f(X,Y))}\simeq K(t)[Z]$: then, $K(t)[X,Y]=K(t)[f,g]$ for some $g\in K(t)[X,Y]$. In particular, $X=p(f,g)$ for some $p\in K(t)[u,v]$. Let $p=\sum_{i,j}\lambda_{ij}u^iv^j$, with $\lambda_{ij}\in K(t)$. Since $K$ is infinite, we can find an $\alpha\in K$ where every $\lambda_{ij}$ is defined (i.e., such that $\alpha$ is not a zero of the denominators of any $\lambda_{ij}$). Then, 
\begin{equation*}
X=X(\alpha)=\sum_{i,j}\lambda_{ij}(\alpha)f^ig^j\in K[f,g].
\end{equation*}
By the same reasoning, $Y\in K[f,g]$, and thus $K[f,g]=K[X,Y]$. Therefore $\frac{K[X,Y]}{(f(X,Y))}\simeq K[Z]$.
\end{proof}

\begin{teor}\label{teor:locallizz-recipXY}
Let $K$ be an infinite field and let $f\in K[X,Y]$ be an irreducible polynomial whose degree is not a multiple of the characteristic of $K$. Let $D_f:=K[X,Y]/(f)$ and let $\pp_f$ be the largest prime ideal of $\recip(K[X,Y])$ not containing $f$. Then, the following hold.
\begin{enumerate}[(a)]
\item\label{teor:locallizz-recipXY:field} $\recip(K[X,Y])_{\pp_f}$ is a field if and only $\recip(D_f)$ is a field.
\item\label{teor:locallizz-recipXY:ic} $\recip(K[X,Y])_{\pp_f}$ is integrally closed if and only if $\recip(D_f)$ is integrally closed.
\end{enumerate}
\end{teor}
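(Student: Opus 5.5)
The plan is to reduce both statements to one-dimensional reciprocal complements and then compare the curve $\Vaff(f)$ over $K$ with the curve $\Vaff(f-t)$ over $K(t)$. By Proposition \ref{prop:transf}\ref{prop:transf:recip}, applied with $\xx=\aff^2_K$ (so $K[\xx]=K[X,Y]$), we have
\begin{equation*}
\recip(K[X,Y])_{\pp_f}\simeq\recip(D'_f),\qquad D'_f:=\frac{K(t)[X,Y]}{(f-t)},
\end{equation*}
and $D'_f$ is a one-dimensional domain (Proposition \ref{prop:transf}\ref{prop:transf:irrid},\ref{prop:transf:dim}). Both \ref{teor:locallizz-recipXY:field} and \ref{teor:locallizz-recipXY:ic} then become comparisons between $\recip(D_f)$ and $\recip(D'_f)$, which are reciprocal complements of the curve $\Vaff(f)\subset\aff^2_K$ and of its $f$-transform $\Vaff(f-t)\subset\aff^2_{K(t)}$.

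For \ref{teor:locallizz-recipXY:field} I would use Theorem \ref{teor:ptoinf}\,(iii). Thus $\recip(D_f)$, respectively $\recip(D'_f)$, is a field exactly when the normalization of the projective closure has more than one point over infinity. On the naive level the two curves have the same points at infinity. Indeed, $f$ and $f-t$ have the same leading form $f_d$, so by Lemma \ref{lemma:numsol-Kt} the zeros of $f_d$ in $\proj^1$ correspond bijectively over $K$ and over $K(t)$. The remaining task is to compare the \emph{branches}, i.e.\ the places at infinity, above each such point. I would work locally at a point $q$ at infinity, in affine coordinates $u=W/X$, $v=Y/X$ (say). There the two curves are given by $F(1,u,v)=0$ and $F(1,u,v)-tu^d=0$. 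The branches of a plane curve singularity are read off from its Newton--Puiseux expansions, and I expect the perturbation $tu^d$ to be invisible to those expansions, because $d=\deg f$ is large compared with the relevant orders of contact at infinity. The hypothesis $\operatorname{char}K\nmid d$ should enter here, ensuring that the expansions are genuinely Puiseux rather than Artin--Schreier type, and I expect the infinitude of $K$ to be used for a specialization $t\mapsto\alpha\in K$, as in the proof of Proposition \ref{prop:curve}.

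This branch comparison is the step I expect to be the main obstacle. In the direction from $\recip(D'_f)$ a field to $\recip(D_f)$ a field, I would first try a more algebraic route instead. Take an element witnessing two places, i.e.\ an element of the ring $A$ constructed in the proof of Proposition \ref{prop:zarminus1}. It is a polynomial expression over $K(t)$ in $x,y$, and one can specialize $t$ to a generic $\alpha\in K$ where all coefficients are defined, exactly as in Proposition \ref{prop:curve}. This shows that the fibre $f=\alpha$ has at least two places at infinity for all but finitely many $\alpha$. It then remains to pass from a generic $\alpha$ to the fibre $f=0$, which is again the branch-comparison problem.

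For \ref{teor:locallizz-recipXY:ic}, I would first characterize integral closedness for curves. When $\recip(D)$ is a field it is trivially integrally closed, so by \ref{teor:locallizz-recipXY:field} we may assume both $\recip(D_f)$ and $\recip(D'_f)$ are one-dimensional. Then, by the last part of Theorem \ref{teor:ptoinf}, the integral closure of $\recip(D)$ is the DVR $\local_{\overline{\xx},p}$ at the unique place at infinity. Hence $\recip(D)$ is integrally closed if and only if $\recip(D)=\local_{\overline{\xx},p}$. I would prove that this happens exactly when $D\simeq L[Z]$, where $L$ is the base field. If $D=L[Z]$, then $\recip(L[Z])=L[Z^{-1}]_{(Z^{-1})}$, which is a DVR. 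Conversely, if every function regular at $p$ lies in $\recip(D)$, then the function field must be rational with $p$ rational and smooth, and I expect this to force $D\simeq L[Z]$ using the argument from \cite{reciprocal-curves}. Once this characterization is in place, Proposition \ref{prop:curve} transfers the condition ``$\simeq K[Z]$'' between $D_f$ and $D'_f$, which proves \ref{teor:locallizz-recipXY:ic}.
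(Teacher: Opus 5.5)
Part (a) of your proposal is not proved. The comparison of places at infinity, which you rightly call the crux, is left as an expectation, and the mechanism you expect is false. Take $f=X^2Y^2+Y-1$ over an infinite $K$ with $\operatorname{char}K\neq 2$; it is irreducible of degree $4$.

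At the point at infinity $[0:1:0]$, in your coordinates $u=W/X$, $v=Y/X$, the two curves are $v^2+vu^3-u^4=0$ and $v^2+vu^3-(1+t)u^4=0$. The perturbation $tu^4$ lies on the Newton polygon: the edge polynomial is $v^2-u^4$, respectively $v^2-(1+t)u^4$. So $f=0$ has two $K$-rational branches $v=\pm u^2+\cdots$ there. Over $K(t)$, the branches $v=\pm\sqrt{1+t}\,u^2+\cdots$ are conjugate and form a single place. Globally there are $3$ places against $2$, so (a) is not contradicted here, but the branch structure at a point at infinity is not preserved.

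Hence no branch-by-branch comparison can work. What you need is a global statement, in both directions: $f$ has exactly one place at infinity over $K$ if and only if $f-t$ has exactly one over $K(t)$. Over an algebraically closed field of characteristic $0$, the geometric half of this is the Abhyankar--Moh invariance of ``one place at infinity'' under $f\mapsto f-\lambda$. Beyond that, places over a non-closed field are Galois orbits, and $t$ can fuse geometric branches as just shown, so you must also compare these orbits over $K$ and over $K(t)$. Your specialization argument only reaches generic fibres $f=\alpha$, as you note yourself, so it does not supply this.

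For comparison, the paper's proof of (a) is exactly your ``naive level''. It uses Lemma~\ref{lemma:numsol-Kt} and the equality of leading forms to show that $f=0$ and $f=t$ have the same number of points at infinity. It then applies Theorem~\ref{teor:ptoinf} as if that number decided whether $\recip$ is a field. That is condition (ii) of Theorem~\ref{teor:ptoinf}, which is justified only when the points at infinity are nonsingular, and the paper does not discuss the singular case. So your objection to the naive count is legitimate, but the paper offers nothing beyond it, and your proposal does not close (a) either.

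In (b) you follow the paper's route. Integral closedness of a one-dimensional $\recip(D)$ means equality with the DVR from Theorem~\ref{teor:ptoinf}. That equality is characterized by $D$ being a polynomial ring, and Proposition~\ref{prop:curve} transfers this condition between $D_f$ and $K(t)[X,Y]/(f-t)$. You take the converse of the characterization from the literature, as the paper does.

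One caveat, shared with the paper's chain: with $L$ the base field, the characterization fails when $L$ is not algebraically closed in $Q(D)$. For $K=\mathbb{R}$ and $f=X^2+1$ one has $D_f\cong\mathbb{C}[Y]$, so $\recip(D_f)\cong\mathbb{C}[Y^{-1}]_{(Y^{-1})}$ is a DVR. Yet $D_f\not\simeq\mathbb{R}[Z]$, since its function field $\mathbb{C}(Y)$ is not rational over $\mathbb{R}$. This case needs separate handling.
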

\begin{proof}
\ref{teor:locallizz-recipXY:field} Let $d$ be the degree of $f$ and $F$ its homogenization in the new variable $W$. By Lemma \ref{lemma:numsol-Kt}, the number of solutions of $F=W=0$ in $K$ and in $K(t)$ are the same; as $d>1$, moreover, the homogenization of $f-t$ is $F-tW^d$, and thus the solutions of $F=W=0$ and $F-tW^d=W=0$ are the same. It follows that the number of points at infinity of $D_f$ is the same as the number of points at infinity of the transform $\transf_{\proj^2}(f)$; hence, $\recip(D_f)$ is a field if and only if $\recip(K(t)[\transf_{\proj^2}(f)])$ is a field. By Proposition \ref{prop:transf}\ref{prop:transf:recip},  $\recip(K[X,Y])_{\pp_f}\simeq\recip(K(t)[\transf_{\proj^2}(f)])$; the claim is proved.

\ref{teor:locallizz-recipXY:ic} By the previous point, we can suppose without loss of generality that $\recip(D_f)$ and $\recip(K[X,Y])_{\pp_f}$ are not fields. Then, they are both one-dimensional local domains.

Using \cite[Proposition 2.3]{reciprocal-curves} (or \cite[Theorem 6.1]{guerrieri-recip}), Proposition \ref{prop:curve} and Proposition  \ref{prop:transf}\ref{prop:transf:recip}, we have
\begin{equation*}
\begin{aligned}
\recip(D_f) \text{~is a DVR}\iff & D_f\simeq K[Z] \text{~for some indeterminate~}Z\\
\iff & \frac{K(t)[X,Y]}{(f(X,Y)-t)}\simeq K(t)[Z]\\
\iff & \recip\left(\frac{K(t)[X,Y]}{(f(X,Y)-t)}\right) \text{~is a DVR}\\
\iff & \recip(K[X,Y])_{\pp_f}\text{~is a DVR}.
\end{aligned}
\end{equation*}
The claim is proved.
\end{proof}

The following corollary has been proved for arbitrary fields in \cite[Theorem 5.8]{EGL-RD-polinomyal} by explicit computation.
\begin{cor}
Let $K$ be an infinite field. then, $\recip(K[X,Y])$ is not integrally closed.
\end{cor}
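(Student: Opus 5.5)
Since integral closedness is preserved by localization, it suffices to find one prime $\pp_f$ of $\recip(K[X,Y])$ such that the localization $\recip(K[X,Y])_{\pp_f}$ is not integrally closed. If $\recip(K[X,Y])$ were integrally closed, every localization $\recip(K[X,Y])_{\pp_f}$ would be integrally closed as well. By Theorem \ref{teor:locallizz-recipXY}\ref{teor:locallizz-recipXY:ic}, the problem therefore reduces to finding an irreducible $f\in K[X,Y]$ with three properties: its degree is not a multiple of $\mathrm{char}\,K$; $\recip(D_f)$ is not a field; and $\recip(D_f)$ is not integrally closed.

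Step one is to make $\recip(D_f)$ non-trivial. By Theorem \ref{teor:ptoinf}, it is enough that the curve $\Vaff(f)$ has a single point at infinity, and that this point is regular, or more generally has a single preimage in the normalization. Step two is to make $\recip(D_f)$ one-dimensional but not a DVR. By the equivalence used in the proof of Theorem \ref{teor:locallizz-recipXY}, which comes from \cite[Proposition 2.3]{reciprocal-curves}, a one-dimensional $\recip(D_f)$ is integrally closed exactly when it is a DVR, and this happens if and only if $D_f\simeq K[Z]$.

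So we need a curve with one place at infinity whose coordinate ring is not a polynomial ring. The natural choice is the cusp $f=Y^2-X^3$, or $X^2-Y^3$. It is irreducible over any field, and its leading form $X^3$ vanishes at infinity only at $[0:0:1]$. Since $D_f\simeq K[s^2,s^3]$ is not normal, it is not isomorphic to $K[Z]$. The degree is $3$, so in characteristic $3$ this fails; there I would switch to $Y^2-X^5$ (degree $5$). To cover characteristics $3$ and $5$ simultaneously, I would use $Y^3-X^4$ in characteristic $\neq 3$ and $Y^2-X^5$ in characteristic $3$. For these curves, $Y^a=X^b$ with $\gcd(a,b)=1$ has a unique place at infinity, given by the parametrization $X=s^a$, $Y=s^b$. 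So by Theorem \ref{teor:ptoinf}\,(iii), $\recip(D_f)$ is not a field.

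The main obstacle is verifying that $D_f\not\simeq K[Z]$ and that there is exactly one place at infinity, which is what item (iii) of Theorem \ref{teor:ptoinf} asks for, since the point at infinity is singular. Both follow from $D_f\simeq K[s^a,s^b]\subsetneq K[s]$. This ring is not integrally closed, because $s$ is integral over it but does not lie in it, so it cannot be a polynomial ring. Moreover, its normalization is $K[s]$, which has exactly one place at infinity. With both properties in hand, Theorem \ref{teor:locallizz-recipXY}\ref{teor:locallizz-recipXY:ic} shows that $\recip(K[X,Y])_{\pp_f}$ is not integrally closed, and hence $\recip(K[X,Y])$ is not integrally closed.
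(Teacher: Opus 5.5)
Your argument is correct in substance and uses the same reduction as the paper. You pass to a localization and apply Theorem \ref{teor:locallizz-recipXY}\ref{teor:locallizz-recipXY:ic}, so everything hinges on exhibiting one irreducible $f$ whose curve has a single place at infinity and $D_f\not\simeq K[Z]$.

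Where you differ is the witness. The paper takes curves that are smooth and regular at infinity but have positive genus ($X^d+Y^{d-1}+1$, and a cubic in characteristic $2$). There $D_f\not\simeq K[Z]$ comes from non-rationality via the genus formula. You take rational but non-normal curves $Y^a=X^b$ with $\gcd(a,b)=1$. Both needed facts then drop out of the parametrization $X=s^a$, $Y=s^b$, with no genus or smoothness computation.

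You could even bypass the cited equivalence ``integrally closed $\iff$ DVR $\iff D_f\simeq K[Z]$''. Since $D_f=K[s^a,s^b]$ with $a,b\ge 2$ has no element of $s$-degree $1$, every element of $\recip(D_f)$ has $s^{-1}$-adic value in $\{0\}\cup[2,\infty)$. Hence $s^{-1}\notin\recip(D_f)$. Yet $s^{-1}$ lies in the quotient field and is integral over $\recip(D_f)$, because $s^{-a}=1/X\in\recip(D_f)$.

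The price is that a singular point at infinity, as for $Y^2-X^5$, leaves the setting where the paper's proof of Theorem \ref{teor:locallizz-recipXY} counts points at infinity rather than places. For $Y^2-X^5$ and its transform $Y^2-X^5-t$ the point is unibranch, so this is harmless. Still, it is an extra check that the paper's smooth witnesses avoid.

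One correction is needed. Your final ``simultaneous'' choice is wrong: $Y^3-X^4$ has degree $4$, a multiple of the characteristic when $\mathrm{char}\,K=2$. So it violates the hypothesis of Theorem \ref{teor:locallizz-recipXY} exactly in characteristic $2$. There was also nothing to fix in characteristic $5$, since $Y^2-X^3$ (degree $3$) is only used when $\mathrm{char}\,K\neq 3$.

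Keep your first assignment: $Y^2-X^3$ when $\mathrm{char}\,K\neq 3$. In characteristic $3$ use $Y^2-X^5$ or, better, $Y^3-X^4$. The homogenization $Y^3W-X^4$ has $\partial_W=Y^3\neq 0$ at $[0:0:1]$, so the point at infinity is regular. The same holds for $Y^2-X^3$, so your remark that ``the point at infinity is singular'' applies only to $Y^2-X^5$.
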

\begin{proof}
Let $p$ be the characteristic of $K$.

We prove that some localization of $\recip(K[X,Y])$ is not integrally closed; by Theorem \ref{teor:locallizz-recipXY}, it is enough to find a polynomial $f\in K[X,Y]$ such that:
\begin{itemize}
\item the degree of $f$ is not a multiple of $p$;
\item $\xx:=\Vaff(f)$ is regular and has a single point at infinity;
\item $\xx$ is not rational.
\end{itemize}
If $p\neq 2$, take $d$ such that $p$ does not divide $d$ nor $d-1$ and consider $f(X,Y):=X^d+Y^{d-1}+1$: then, $D_f$ has a single point at infinity (namely, $[0:0:1]$), is regular and has genus $(d-1)(d-2)/2>0$ \cite[0BYD]{stacks-project}, so it is not rational.

If $p=2$, the curve defined by $f(X,Y):=X^3+Y^2+Y^2+XY+1$ has the same properties. In all cases, $\recip(K[X,Y])$ is not integrally closed.
\end{proof}

\bibliographystyle{plain}
\bibliography{/bib/miei,/bib/articoli,/bib/libri,/bib/eventualia}
\end{document}